\newtheorem{thm}{Theorem}[section]
\newtheorem{cor}[thm]{Corollary}
\newtheorem{lem}[thm]{Lemma}
\newtheorem{prop}[thm]{Proposition}
\theoremstyle{definition}
\theoremstyle{remark}
\newtheorem{rem}[thm]{Remark}
\newtheorem{exa}[thm]{Example}
\theoremstyle{conjecture}
\numberwithin{equation}{section}
\begin{document}

\title[Differentiation and integration operators]{Differentiation and integration operators on weighted Banach spaces of holomorphic functions}

\author[A.V. Abanin]{Alexander V. Abanin}
\address{Southern Federal University\\ Rostov-on-Don 344090\\
and Southern Institute of Mathematics\\ Vladikavkaz 362027\\ Russian Federation}
\email{abanin@math.rsu.ru}

\author[Pham Trong Tien]{Pham Trong Tien}
\address{Hanoi University of Science, VNU \\ 334 Nguyen Trai, Thanh Xuan, Ha Noi\\ Viet Nam}
\email{phamtien@mail.ru}



\begin{abstract}
 We obtain a new natural description of the class of radial weights for which some previous results concerning the boundedness of differentiation and integration operators on corresponding spaces are valid. To do this, we develop a new elementary approach which is essentially different from the previous one and can be applied for weights and domains of general types. We also establish a new characterization of some popular classes of radial weights.
\end{abstract}


\subjclass[2010]{Primary 47B38; Secondary 46E15}

\keywords{Weighted spaces of holomorphic functions, differentiation operator, integration operator}

\maketitle

\section{Introduction} 
Let $G$ be a domain in the complex plane $\mathbb{C}$ and $H(G)$ the space of all holomorphic functions on $G$.
For a positive continuous function (a \textit{weight}) $v$ on $G$ define the Banach space
$$
H_v(G): = \{f \in H(G): \|f\|_v: = \sup_{z \in G} \dfrac{|f(z)|}{v(z)} < \infty \},
$$
endowed with the norm $\|\cdot\|_v$.
Spaces of such a type and classical operators between them play an important role in analysis and its applications (see e.g. Bonet's review \cite{B-03}).
From this reason they have been studied by many authors (see e.g. \cite{AT-12}, \cite{BBF-13}, \cite{BBF-15}, \cite{BBT-98}, \cite{BS-93} \cite{B-03}, \cite{B-09}, \cite{BB-13}, \cite{BDL-99}, \cite{BDLT-98}, \cite{BW-03}, \cite{DoLi-02}, \cite{HL-08}, \cite{L-92}, \cite{L-95}, \cite{L-96}, \cite{L-06}).
In particular, Harutyunyan and Lusky \cite{HL-08} have investigated the boundedness of the differentiation and integration operators in the cases when $G$ is either the whole complex plane $\mathbb{C}$ or the open unit disk $\mathbb{D} = \{z \in \mathbb{C}: |z| < 1\}$ and $v$ a radial weight.
Later on, their results were used in \cite{BBF-13}, \cite{BBF-15}, \cite{B-09}, \cite{BB-13} as a starting point for the study of some important properties of these two operators.
Recall that a \textit{ radial weight } on $G=\mathbb{C}$ or $G=\mathbb{D}$ is a positive function $v$ on $G$ with $v(z)=v(|z|)$, $z\in G$, where $v(r)$ is continuous and increasing on $[0,a)$ and
\[
  \log r = o(\log v(r)) \text{ as } r \to \infty \text{ if } G =\mathbb{C} \ \ \text{ and } \ \ \lim_{r \to 1^-}v(r) = \infty \text{ if } G = \mathbb{D}.
\]
Here and below $a=+\infty$ for $G=\mathbb C$ and $a=1$ for $G=\mathbb D$.
In some proofs in \cite{HL-08} it was used a rather complicated technique developed by Lusky in \cite{L-06} to give an isomorphic classification of spaces $H_v(\mathbb C)$ and $H_v(\mathbb D)$ defined by radial weights.
Moreover, all the main results in \cite{HL-08} were obtained under the following additional assumption:
\begin{center}
(HL)
\textit{ any $r$ in $[0,a)$ is a global maximum point for some function $\gamma_n(r): = r^n / v(r), n\in(0,\infty)$},
\end{center}
which looks hard to check. It should be noted that the results in \cite{HL-08} were used in the papers \cite{BBF-13}, \cite{BBF-15}, \cite{B-09}, \cite{BB-13} without mentioning this assumption (HL).
In view of these reasons, we develop an essentially new elementary method to study the boundedness of the classical operators in Banach weighted spaces of holomorphic functions.
It suits to all domains and weights and, in the radial case, allows us to remove some additional restrictions used in \cite{HL-08} (see Remark~\ref{rem-l} below).

Our idea is based on the following observation: as is well known (see e.g. \cite{BBT-98} and \cite{AT-12}), to characterize properties of spaces $H_v(G)$ and operators between them in terms of weights, one cannot, in general, use the initial weights defining the spaces but should use so-called associated or essential weights.
Recall (see Bierstedt-Bonet-Taskinen \cite{BBT-98}) that, given a weight $v$ on $G$, its \textit{associated weight} is defined by
$$
\widetilde{v}(z): = \sup\{|f(z)|: f \in H_v(G), \|f\|_v \leq 1 \}.
$$
Note that, for a nontrivial $H_v(G)$,   $0<\widetilde v\le v$ and $\log \widetilde{v}$ is a continuous subharmonic function on $G$ and $H_{\widetilde{v}}(G) = H_v(G)$ isometrically.
We say that a weight $v$ on $G$ is \textit{essential} if $v$ is equivalent to $\widetilde v$, that is, there is a constant $C\ge 1$ such that  $v(z) \leq C \widetilde v(z) \text{ for all } z \in G$.
It is easy to see that for a radial weight $v$ its associated weight $\widetilde{v}$ is also radial and $\widetilde{v}(r)$ is increasing and \textit{log-convex} (i.e. the function $\log\widetilde{v}(e^x)$ is convex), which is equivalent to the subharmonicity of $\log\widetilde v(z)$.
Thus, if one hope to obtain a complete description of some topological properties of spaces $H_v(G)$ or linear operators between them in terms of weights in  the radial case, he should use log-convex weights.

In Section~2 we establish some necessary and sufficient conditions for the boundedness of the differentiation operator $D: f\mapsto f'$ in both general and radial cases.
Then, under the natural assumption that $v$ is a log-convex weight, we give new proofs of criteria for the boundedness of the operator $D: H_v(G) \to H_w(G)$, where $w(r)= v(r)/(1-r)$ on $G = \mathbb{D}$ and $w = v$ on $G = \mathbb{C}$. Note that these criteria contain the following new characterization of bounded differentiation operators:\textit{ The differentiation operator as above is continuous if and only if $v(r)=O(v(r^2))$ as $r\to1^-$ for $\mathbb D$ and $\log v(r)=O(r)$ as $r\to\infty$ for $\mathbb C$}. This is also a new description of weights satisfying some popular conditions (see Doma\'nski--Lindst\"om \cite{DoLi-02} and references therein). In addition, we construct two examples showing that the log-convexity of weights is essential for these results.

In Section~3, similarly to Section~2, we study  the integration operator $I: f\mapsto \int_0^z f(t)\,dt$. In particular, we obtain a complete description of those log-convex weights $v$ for which the integration operator $I:H_v(\mathbb C)\to H_v(\mathbb C)$ is bounded. In the case of the unit disc the situation is more complicated. We establish criteria for only some classes of weights introduced before in Horowitz \cite{Hor-95} and Bierstedt--Bonet--Taskinen \cite{BBT-98} or satisfying some additional assumptions.

\section{Boundedness of the differentiation operator}
In this section we study the differentiation operator $D: H_v(G) \to H_w(G)$ via a new method which is absolutely different from the one of Harutyunyan and Lusky \cite{HL-08}. In particular, we prove that the main results in \cite[Theorems~3.2 and 4.1]{HL-08} are true for log-convex weights. Moreover, we give some examples which show that the use of the log-weights is essential for these results (see Examples \ref{ex-1} and \ref{ex-2}).

We start with some simple necessary and sufficient conditions for the boundedness  of the differentiation operator on weighted spaces of a general type.

\begin{prop}\label{prop-ne}
Let $v$ and $w$ be two weights on $G$. If the differentiation operator $D:H_v(G) \to H_w(G)$ is bounded, then there is a constant $C>0$ such that
\begin{equation}\label{eq-1}
\limsup_{\Delta z \to 0} \left| \dfrac{\widetilde{v}(z+\Delta z)-\widetilde{v}(z)}{\Delta z} \right| \leq C w(z) \text{ for all } z \in G.
\end{equation}
\end{prop}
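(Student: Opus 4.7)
The plan is to exploit the envelope representation of $\widetilde{v}$ together with the elementary bound on derivatives of unit-ball elements produced by the hypothesis that $D$ is bounded.

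First I would unpack the boundedness of $D:H_v(G)\to H_w(G)$. If $\|D\|\le C$, then for every $f\in H_v(G)$ with $\|f\|_v\le 1$ we have $|f'(\zeta)|\le Cw(\zeta)$ for all $\zeta\in G$. Next, fix $z\in G$ and let $\Delta z$ be small enough so that the closed segment $[z,z+\Delta z]$ lies in $G$ (this is possible since $G$ is open). For any such $f$ in the closed unit ball, the fundamental theorem of calculus along that segment gives
\[
|f(z+\Delta z)-f(z)|\;\le\;|\Delta z|\,\max_{\zeta\in[z,z+\Delta z]}|f'(\zeta)|\;\le\;C|\Delta z|\,\max_{\zeta\in[z,z+\Delta z]}w(\zeta).
\]

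The key step is to transfer this bound from individual $f$'s to $\widetilde{v}$. The definition of the associated weight as a sup yields, for any $f$ with $\|f\|_v\le 1$,
\[
|f(z+\Delta z)|\;\le\;|f(z)|+|f(z+\Delta z)-f(z)|\;\le\;\widetilde{v}(z)+C|\Delta z|\max_{\zeta\in[z,z+\Delta z]}w(\zeta),
\]
and taking the supremum over the unit ball produces $\widetilde{v}(z+\Delta z)-\widetilde{v}(z)\le C|\Delta z|\max_{\zeta\in[z,z+\Delta z]}w(\zeta)$. Swapping the roles of $z$ and $z+\Delta z$ yields the symmetric inequality, and altogether
\[
\frac{|\widetilde{v}(z+\Delta z)-\widetilde{v}(z)|}{|\Delta z|}\;\le\;C\max_{\zeta\in[z,z+\Delta z]}w(\zeta).
\]

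Finally, letting $\Delta z\to 0$ and using the continuity of $w$, the right-hand side tends to $Cw(z)$, so the $\limsup$ is bounded by $Cw(z)$, as required. I expect the only genuinely delicate point to be the envelope transfer, i.e.\ recognizing that the sup in the definition of $\widetilde{v}$ interacts cleanly with the triangle inequality applied pointwise to each $f$ in the unit ball; once that observation is in place, the argument reduces to a uniform mean-value estimate and the continuity of $w$.
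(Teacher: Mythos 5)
Your proof is correct, and it diverges from the paper's at exactly one point: how the derivative bound is transferred to the associated weight. Both arguments begin identically — boundedness of $D$ gives the uniform estimate $|f'(\zeta)|\le Cw(\zeta)$ for all $f$ in the unit ball $B_v(G)$, and integrating $f'$ along the segment $[z,z+\Delta z]\subset G$ gives $|f(z+\Delta z)-f(z)|\le |\Delta z|\max_{\zeta\in[z,z+\Delta z]}|f'(\zeta)|$. The paper then assumes WLOG that $\widetilde v(z+\Delta z)\ge \widetilde v(z)$ and invokes the extremal-function result of Bierstedt--Bonet--Taskinen \cite[1.2 (iv)]{BBT-98} to produce a single $f\in B_v(G)$ attaining $f(z+\Delta z)=\widetilde v(z+\Delta z)$, after which the reverse triangle inequality $f(z+\Delta z)-|f(z)|\le |f(z+\Delta z)-f(z)|$ finishes the estimate. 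You instead work with an arbitrary $f\in B_v(G)$, bound $|f(z)|\le\widetilde v(z)$ by the very definition of $\widetilde v$, and pass the supremum over the unit ball through the inequality (legitimate, since the error term $C|\Delta z|\max_{[z,z+\Delta z]}w$ is independent of $f$ and $\widetilde v(z)\le v(z)<\infty$); swapping $z$ and $z+\Delta z$ then gives the two-sided bound with no WLOG needed. Your route is genuinely more elementary: it never requires the supremum defining $\widetilde v$ to be attained, so the citation to \cite{BBT-98} disappears, and the symmetry of the estimate is automatic. What the paper's extremal-function argument buys is mainly brevity — one function, one chain of inequalities — plus familiarity with a tool that is reused elsewhere in the associated-weight literature. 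Both proofs conclude identically: as $\Delta z\to 0$ the segment shrinks to $z$, and continuity of $w$ gives $\max_{\zeta\in[z,z+\Delta z]}w(\zeta)\to w(z)$, hence the $\limsup$ in \eqref{eq-1} is at most $Cw(z)$.
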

\begin{proof}
If $D:H_v(G) \to H_w(G)$ is continuous, then there is a constant $C>0$ such that
$ \|f'\|_w \leq C$  for all $f$ in $B_v(G)$,  the unit ball of $H_v(G)$.

Fix arbitrary $z \in G$ and $\Delta z \in \mathbb{C}$ so that the closed interval $[z, z+ \Delta z]$ is contained in $G$.
Without loss of generality we may assume that $\widetilde{v}(z+\Delta z) \geq \widetilde{v}(z)$.
By Bierstedt, Bonet and Taskinen \cite[1.2 (iv)]{BBT-98}, there exists a function $f$ in $B_v(G)$ with
$f(z+\Delta z) = \widetilde{v}(z+\Delta z)$.
Then we have
\begin{align*}
|\widetilde{v}(z+\Delta z) - \widetilde{v}(z)| & = \widetilde{v}(z+\Delta z) - \widetilde{v}(z) \leq f(z+\Delta z) - |f(z)| \\
& \leq |f(z+\Delta z) - f(z)| \leq |f'(\xi)||\Delta z|
\end{align*}
for some point $\xi \in [z,z+\Delta z]$.
Hence, $|\widetilde{v}(z+\Delta z) - \widetilde{v}(z)| \leq C w(\xi) |\Delta z|$, which implies \eqref{eq-1}.
\end{proof}

 Consider now the special cases when $G = \mathbb{D}$ or $G = \mathbb{C}$ and weights are radial.
  As it was noted above, for a radial weight $v$ the function $\widetilde{v}(r)$ is increasing and  log-convex.
 Consequently, $\widetilde{v}$ is differentiable on $(0,\infty)$ except not more than countably many points and its right derivative exists everywhere on $(0,\infty)$ and increases.
 In what follows we will use the notation $\widetilde v'$ for the right derivative of $\widetilde v$.
 From Proposition~\ref{prop-ne} it follows immediately:

\begin{cor}\label{cor-ne}
Let $v$ and $w$ be two radial weights on $G=\mathbb{D}$ ( resp., $G=\mathbb{C}$). If the differentiation operator $D:H_v(G) \to H_w(G)$ is continuous, then
\begin{equation}\label{eq-2}
\limsup_{r\to 1^-} \dfrac{\widetilde{v}'(r)}{w(r)} <\infty\ \  \left(\text{resp., } \limsup_{r\to\infty} \dfrac{\widetilde{v}'(r)}{w(r)} <\infty\right).
\end{equation}
\end{cor}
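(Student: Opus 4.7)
The plan is to deduce the corollary directly from Proposition~\ref{prop-ne} by restricting the two-dimensional inequality \eqref{eq-1} to real positive perturbations along the positive real axis. Since $v$ is radial, the associated weight $\widetilde v$ is also radial, and as was recalled in the paragraph preceding the statement, $\widetilde v(r)$ is increasing and log-convex on $[0,a)$; hence $\widetilde v$ is convex and its right derivative $\widetilde v'(r)$ exists and is finite at every $r\in(0,a)$, so the quantities appearing in \eqref{eq-2} are well defined.

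Next I would fix an arbitrary $r\in(0,a)$, apply \eqref{eq-1} at $z=r$, and use perturbations $\Delta z = h$ with $h>0$ real. Radiality of $\widetilde v$ collapses the two-dimensional difference quotient to
\[
\frac{\widetilde v(r+h)-\widetilde v(r)}{h},
\]
whose limit as $h\to 0^+$ is, by definition, $\widetilde v'(r)$. Since the full limsup in \eqref{eq-1} dominates any one-sided directional limit, this gives
\[
\widetilde v'(r)\le C\, w(r) \qquad \text{for every } r\in(0,a).
\]
Passing to $\limsup$ as $r\to 1^-$ in the disc case, respectively $r\to\infty$ in the plane case, then yields \eqref{eq-2}.

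I expect no real obstacle here; the only point deserving a brief comment is the legitimacy of replacing the full two-dimensional $\limsup$ in \eqref{eq-1} by the one-sided real-axis limit, which is immediate because the former is, by construction, an upper bound for every directional limit at $z$. The preparatory remark about log-convexity of $\widetilde v$ is what guarantees that the pointwise inequality $\widetilde v'(r)\le Cw(r)$ has content on the entire interval $(0,a)$ and not merely almost everywhere.
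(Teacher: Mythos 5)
Your proposal is correct and is essentially the paper's own argument: the paper states the corollary as following ``immediately'' from Proposition~\ref{prop-ne}, and the intended deduction is precisely your restriction of the limsup in \eqref{eq-1} at $z=r$ to real increments $\Delta z=h>0$, which yields $\widetilde v'(r)\le Cw(r)$ pointwise and hence \eqref{eq-2}. One small inaccuracy worth noting: log-convexity in the paper's sense (convexity of $\log\widetilde v(e^x)$ in $x$) does not imply that $\widetilde v(r)$ is convex in $r$, but this is harmless since the only fact you need --- existence of the right derivative $\widetilde v'(r)$ at every point --- does follow from it, exactly as recalled in the paragraph preceding the corollary.
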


In \cite[Theorems~2.1(b)]{HL-08} a similar result was established for weights satisfying additional assumption (HL) but without using in \eqref{eq-2} the associated weight $\widetilde{v}$.

A continuous function $\rho: G \rightarrow (0,1]$ is called a \textit{distance function} on $G$ if $\rho(z) < dist(z,\partial{G})$ for each $z$ in $G$.
For a weight $v$ and a distance function $\rho$ on $G$ we define the following new weight
$$
v_{\rho}(z): = \dfrac{1}{\rho(z)}\max_{|\zeta| \leq \rho(z)} v(z+\zeta),\ z \in G.
$$
Obviously, $v(z) \leq v_{\rho}(z)$ for all $z \in G$.
\begin{prop}
For any weight $v$ and distance function $\rho$ on $G$ the differentiation operator $D: H_v(G) \to H_{v_{\rho}}(G)$ is continuous.
\end{prop}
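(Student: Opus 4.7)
The plan is to reduce the statement to the standard Cauchy estimate for the first derivative of a holomorphic function; the definition of $v_\rho$ was tailored for exactly this purpose, so the proof will essentially be a one-line application of that estimate together with the pointwise bound $|f(\zeta)| \leq v(\zeta)\,\|f\|_v$.

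Concretely, I would fix $f$ in the unit ball of $H_v(G)$ and an arbitrary $z \in G$. Because $\rho$ is a distance function, $\rho(z) < \dist(z,\partial G)$, so the closed disc $\{\zeta : |\zeta - z| \leq \rho(z)\}$ is contained in $G$ and the Cauchy integral representation of $f'(z)$ on the circle $|\zeta - z| = \rho(z)$ is available. The usual $ML$-estimate on this contour then gives
\[
|f'(z)| \leq \frac{1}{\rho(z)} \max_{|\zeta - z| = \rho(z)} |f(\zeta)| \leq \frac{1}{\rho(z)} \max_{|\eta| \leq \rho(z)} v(z+\eta) = v_\rho(z),
\]
where the second inequality uses $|f| \leq v$ on $G$. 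Dividing by $v_\rho(z)$ and taking the supremum over $z \in G$ yields $\|f'\|_{v_\rho} \leq 1$, so $D : H_v(G) \to H_{v_\rho}(G)$ is bounded with operator norm at most $1$.

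There is no genuine obstacle here: the proposition is a reformulation of Cauchy's estimate in weighted language, and the only point requiring a moment's care is the strict inequality $\rho(z) < \dist(z,\partial G)$ (rather than $\leq$), which is precisely what guarantees that the closed disc used in the contour integral sits inside $G$ and that the maximum in the definition of $v_\rho$ is taken over a compact subset of $G$.
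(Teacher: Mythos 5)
Your proof is correct and is essentially identical to the paper's own argument: both apply the Cauchy integral formula for $f'$ on the circle of radius $\rho(z)$, bound $|f|$ pointwise by $\|f\|_v\,v$, and read off $\|f'\|_{v_\rho}\le\|f\|_v$, so $D$ has norm at most $1$. Your remark that the strict inequality $\rho(z)<\dist(z,\partial G)$ keeps the closed disc inside $G$ is the right (and only) point of care, and it is handled correctly.
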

\begin{proof}
 By the classical Cauchy  formula we have that
$$
f'(z) = \dfrac{1}{2\pi i}\int_{|\zeta| = \rho(z)} \dfrac{f(z+\zeta)}{\zeta^2}\,d\zeta,\ z \in G,
$$
for any holomorphic function $f$ on $G$.
Then, for all $z \in G$ and $f \in H_v(G)$,
$$
|f'(z)| \leq \dfrac{1}{\rho(z)} \|f\|_v \max_{|\zeta| \leq \rho(z)} v(z+\zeta) = \|f\|_v v_{\rho}(z).
$$
Hence, $\|f'\|_{v_{\rho}} \leq \|f\|_v$ for all $f \in H_v(G)$, which completes the proof.
\end{proof}
\begin{cor}\label{cor-su}
Let $v$ and $w$ be two weights and $\rho$ a distance function on $G$. If there is a constant $C>0$ such that $v_{\rho}(z) \leq C w(z)$ for all $z \in G$, then the operator $D: H_v(G) \to H_w(G)$ is continuous.
\end{cor}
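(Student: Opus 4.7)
The plan is to deduce this immediately from the preceding proposition together with the pointwise hypothesis $v_{\rho}(z) \le C w(z)$. The preceding proposition already does all the analytic work via the Cauchy integral formula; Corollary~\ref{cor-su} is merely a comparison-of-weights consequence.

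First I would invoke the preceding proposition to get, for every $f \in H_v(G)$, the pointwise estimate
\[
|f'(z)| \le \|f\|_v\, v_{\rho}(z), \qquad z \in G.
\]
Next I would plug in the hypothesis $v_{\rho}(z) \le C w(z)$ to obtain $|f'(z)| \le C\|f\|_v\, w(z)$ for every $z \in G$. Dividing by $w(z)$ and taking the supremum then yields $\|f'\|_w \le C\|f\|_v$, so $D \colon H_v(G) \to H_w(G)$ is bounded with operator norm at most $C$.

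There is no real obstacle: the only thing to keep track of is the direction of the inequality. Since weights appear in the denominator of the sup-norms, the hypothesis $v_{\rho} \le Cw$ gives $\|g\|_w \le C\|g\|_{v_{\rho}}$ for every $g \in H(G)$, which is exactly the direction needed to turn the bound $\|f'\|_{v_{\rho}} \le \|f\|_v$ from the proposition into $\|f'\|_w \le C\|f\|_v$. The corollary is therefore a one-line consequence.
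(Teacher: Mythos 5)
Your proof is correct and follows exactly the paper's (implicit) argument: the paper also treats this corollary as an immediate consequence of the preceding proposition's pointwise bound $|f'(z)| \le \|f\|_v\, v_{\rho}(z)$ combined with the hypothesis $v_{\rho} \le C w$, yielding $\|f'\|_w \le C\|f\|_v$. Nothing is missing, and your attention to the direction of the weight comparison is the right (and only) point of care.
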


Applying Corollary \ref{cor-su} to radial weights on $\mathbb{D}$ (resp. $\mathbb{C}$) with the distance function $\rho(z) =(1-|z|)/2$ ( resp., $\rho(z)=1$), we immediately obtain the following result.
\begin{cor}\label{cor-su-r}
The following assertions are valid.
\begin{enumerate}
\item[1.] Let $v$ and $w$ be two radial weights on $\mathbb{D}$ such that for some constant $C>0$
$$
\dfrac{1}{1-r}v\left(\dfrac{1+r}{2}\right) \leq C w(r) \text{ for all } r \in [0,1).
$$
Then the operator $D: H_v(\mathbb{D}) \to H_w(\mathbb{D})$ is continuous.
\item[2.] Let $v$ and $w$ be two radial weights on $\mathbb{C}$ such that for some constant $C>0$
$$
v(1+r) \leq C w(r) \text{ for all } r \in [0,\infty).
$$
Then the operator $D: H_v(\mathbb{C}) \to H_w(\mathbb{C})$ is continuous.
\end{enumerate}
\end{cor}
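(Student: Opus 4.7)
The plan is to verify the two assertions by applying Corollary \ref{cor-su} with explicit choices of distance function $\rho$, and reducing the hypotheses to the precise form demanded there. The only substantive input is the calculation of $v_\rho$, which is straightforward once one uses that a radial weight is monotone increasing in $|z|$.

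For part 1, I will take $\rho(z) := (1-|z|)/2$ on $\mathbb{D}$. First I would check this is a legitimate distance function: it is continuous, takes values in $(0,1/2] \subset (0,1]$, and satisfies $\rho(z) < 1 - |z| = \mathrm{dist}(z, \partial\mathbb{D})$. Next I would compute $v_\rho(z)$ directly. Since $v(r)$ is increasing in $r$, the maximum of $v(|z+\zeta|)$ over $|\zeta| \leq \rho(z)$ is attained where $|z+\zeta|$ is as large as possible, namely at $\zeta = \rho(z) z/|z|$ (for $z\neq 0$), giving $|z+\zeta| = |z| + \rho(z) = (1+|z|)/2$. Hence
\begin{equation*}
v_\rho(z) = \frac{2}{1-|z|}\, v\!\left(\frac{1+|z|}{2}\right).
\end{equation*}
The hypothesis of part 1 then yields $v_\rho(z) \leq 2Cw(z)$, and Corollary \ref{cor-su} immediately gives continuity of $D: H_v(\mathbb{D})\to H_w(\mathbb{D})$.

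For part 2, I will take $\rho(z) \equiv 1$ on $\mathbb{C}$, which is trivially a distance function since $\mathrm{dist}(z,\partial\mathbb{C}) = \infty$. By the same monotonicity argument,
\begin{equation*}
v_\rho(z) = \max_{|\zeta|\leq 1} v(z+\zeta) = v(|z|+1),
\end{equation*}
and the hypothesis $v(1+r)\leq Cw(r)$ is precisely $v_\rho \leq Cw$, so Corollary \ref{cor-su} applies again.

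I do not anticipate any real obstacle: the whole argument is a matter of unpacking definitions. The one point that warrants care is the identification of the maximum of $v(|z+\zeta|)$ over a small disk as $v(|z|+\rho(z))$, and this rests entirely on the radiality and monotonicity of $v$ built into the definition of a radial weight given in the introduction.
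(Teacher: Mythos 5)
Your proof is correct and is exactly the paper's argument: the paper likewise applies Corollary~\ref{cor-su} with the distance functions $\rho(z)=(1-|z|)/2$ on $\mathbb{D}$ and $\rho(z)\equiv 1$ on $\mathbb{C}$, leaving implicit the computation of $v_\rho$ that you spell out. Your identification of $\max_{|\zeta|\le\rho(z)}v(z+\zeta)=v(|z|+\rho(z))$ via radiality and monotonicity is the correct and only nontrivial verification, so nothing is missing.
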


Now we consider special cases when $w(r) = v(r)/(1-r)$ on $\mathbb{D}$ and $w = v$ on $\mathbb{C}$ and show that the previous results in these cases remain valid for log-convex weights.
To do this, we need the following auxiliary result concerning growth conditions on a radial weight.
\begin{lem}\label{lem-d}
For a radial weight $v$ on $\mathbb{D}$ consider the following conditions:
\begin{enumerate}
\item[(i)] $\displaystyle \limsup_{ r \to 1^-} \dfrac{(1-r)v'(r)}{v(r)} < \infty$.
\item[(ii)] $(1-r)^\alpha v(r)$ is decreasing on $[r_0,1)$ for some $\alpha>0$ and $r_0\in[0,1)$.
\item[(iii)] $(1-r)^\alpha v(r)$ is almost decreasing on $[0,1)$ for some $\alpha>0$, i.e. there exists a constant $C>0$ such that for any $r_1<r_2$ it follows that $(1-r_2)^\alpha v(r_2)\le C(1-r_1)^\alpha v(r_1)$.
\item[(iv)] $1/v$ satisfies condition $(*)$ from \cite[p.~310]{L-95} and \cite[Definition~2.1]{L-96}, i.e.
\[
 \sup_{n} \dfrac{v(1-2^{-n-1})}{v(1-2^{-n})} < \infty.
\]
\item[(v)]
There exists $\delta_0\in(0,1)$ such that
\[
v\left(\dfrac{r+\delta_0}{1+\delta_0r}\right)=O(v(r)) \ \text{ as } r\to1^{-}.
\]
\item[(vi)]
\[
v(r)=O(v(r^2)) \ \text{ as } r\to1^{-}.
\]
\end{enumerate}
Then (i)$\Longleftrightarrow$(ii)$\Longrightarrow$(iii)$\Longleftrightarrow$
(iv)$\Longleftrightarrow$(v)$\Longleftrightarrow$(vi).
For a log-convex weight $v$ conditions (i)--(vi) are all equivalent.
\end{lem}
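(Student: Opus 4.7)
The plan is to split the statement into three parts: the calculus equivalence (i)$\Longleftrightarrow$(ii), the combinatorial equivalences among (iii)--(vi), and the closure (iii)$\Longrightarrow$(i) under log-convexity. For (i)$\Longleftrightarrow$(ii) I would simply compute
\[
\frac{d}{dr}\bigl((1-r)^{\alpha}v(r)\bigr) = (1-r)^{\alpha-1}\bigl[(1-r)v'(r) - \alpha v(r)\bigr],
\]
so that $(1-r)^\alpha v(r)$ is decreasing on $[r_0,1)$ iff $(1-r)v'(r) \le \alpha v(r)$ there, giving both directions by choosing $\alpha$ just above the $\limsup$ for $\Longrightarrow$ and reading off the $\limsup$ bound for $\Longleftarrow$. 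Then (ii)$\Longrightarrow$(iii) follows by absorbing the ratio $\sup_{[0,r_0]} v / \inf_{[0,r_0]} v$ into the almost-decreasing constant, which is finite by continuity and positivity.

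For the block (iii)$\Longleftrightarrow$(iv)$\Longleftrightarrow$(v)$\Longleftrightarrow$(vi) the unifying observation is that each condition controls how $v$ grows under a fixed-ratio compression of $1-r$. For (iii)$\Longleftrightarrow$(iv), the dyadic choice $r_1 = 1-2^{-n}$, $r_2 = 1-2^{-n-1}$ yields the direct implication with $M = C\cdot 2^\alpha$; conversely, for $r_1 < r_2$ in $[0,1)$ I pick integers $n_1 \le n_2$ with $1-2^{-n_i} \le r_i < 1-2^{-n_i-1}$, iterate (iv) to get $v(r_2) \le M^{n_2-n_1+1} v(r_1)$, and take $\alpha = \log_2 M$. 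For the remaining equivalences I would rely on the identities
\[
1-\frac{r+\delta_0}{1+\delta_0 r} = \frac{(1-\delta_0)(1-r)}{1+\delta_0 r}, \qquad 1-r^2 = (1-r)(1+r),
\]
each showing that the corresponding transformation multiplies $1-r$ by a factor bounded above and below by positive constants; a bounded number of iterations then interpolates between each of (v), (vi) and the dyadic condition (iv).

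The crux is (iii)$\Longrightarrow$(i) under log-convexity. Writing $\phi(x) := \log v(e^x)$, convexity ensures that the right derivative $\phi'$ is monotone increasing, and the support-line inequality gives, for $r_1 < r_2$ in $(0,1)$,
\[
\log v(r_2) - \log v(r_1) \ge \phi'(\log r_1)\log(r_2/r_1) = \frac{r_1\,v'(r_1)}{v(r_1)}\,\log\frac{r_2}{r_1}.
\]
Pairing this with the upper bound from (iii) and specializing to $r_2 = (1+r_1)/2$, so that $(1-r_1)/(1-r_2) = 2$ and $\log(r_2/r_1) = (1-r_1)/2 + O((1-r_1)^2)$ as $r_1 \to 1^-$, yields $(1-r_1)\,v'(r_1)/v(r_1) = O(1)$, which is (i). The main obstacle is precisely this last step: pointwise growth control (iii) cannot in general be upgraded to derivative control (i), and all the leverage must come from log-convexity, which the section's examples will confirm is indispensable. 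A subsidiary technical point is that $v$ need only be differentiable outside a countable set, so I interpret $v'$ throughout as the right derivative, whose monotonicity is guaranteed by convexity of $\phi$.
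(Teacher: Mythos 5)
Your proposal is correct, and its skeleton coincides with the paper's: the calculus equivalence (i)$\Leftrightarrow$(ii), the block (iii)--(vi) read as growth conditions under fixed-ratio compressions of $1-r$, and log-convexity closing the loop through chord slopes of the convex function $\varphi(x)=\log v(e^x)$. The differences are in implementation, and they are worth noting. Where you argue directly, the paper delegates: (i)$\Leftrightarrow$(ii) is cited to Harutyunyan--Lusky and (iii)$\Leftrightarrow$(iv)$\Leftrightarrow$(v) to Doma\'nski--Lindstr\"om, so your derivative computation for $(1-r)^\alpha v(r)$ and your dyadic iteration with $\alpha=\log_2 M$ make the lemma self-contained where the paper is not. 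The one equivalence the paper does prove in this block, (v)$\Leftrightarrow$(vi), it handles by comparing $r^\gamma$ with the M\"obius point $(r+\delta_0)/(1+\delta_0 r)$ (taking $\gamma=1/2$, $\delta_0=1/4$ for the converse); this is precisely the monotonicity-plus-bounded-iteration interpolation you sketch, though your sketch should, to be airtight, record that passing from (vi) to (v) forces a sufficiently small $\delta_0$ so that the M\"obius-compressed point lies below $\sqrt r$ near $1$. For the crux, the paper proves (iv)$\Rightarrow$(i): it brackets $r$ by $1-2^{-n}\le r<1-2^{-n-1}$ and bounds $r(\log v)'(r)$ by the chord slope of $\varphi$ over the next dyadic interval, getting $\limsup\le 4\log A$; your version runs (iii)$\Rightarrow$(i) with the continuous pair $\bigl(r,(1+r)/2\bigr)$ and the support-line inequality, the same mechanism but without dyadic bookkeeping, yielding the bound $2(\log C+\alpha\log 2)$ directly --- a mild simplification, legitimate since (iii)$\Leftrightarrow$(iv) is already in hand. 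Your convention of reading $v'$ as the right derivative matches the paper's, stated before its Corollary~\ref{cor-ne}, so no gap arises there either.
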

\begin{proof}
(i)$\Longleftrightarrow$(ii): Similarly to \cite[Theorem~3.2]{HL-08}.

(ii)$\Longrightarrow$(iii): Obvious.

(iii)$\Longleftrightarrow$(iv): See \cite[Lemma~1 (a)]{DoLi-02}.

(iv)$\Longleftrightarrow$(v): Follows from the proof of Lemma~1 in \cite{DoLi-02} (see (a): equivalence (iii)$\Longleftrightarrow$(v)).

(v)$\Longleftrightarrow$(vi):
Given $\delta_0\in(0,1)$, choose $\gamma\in(0,1)$ so that $(1-\gamma)/(1+\gamma)<\delta_0$.
Since
$$
r^\gamma\frac{1-r^{1-\gamma}}{1-r^{1+\gamma}}\to\frac{1-\gamma}{1+\gamma}\ \text{ as } r\to1^-,
$$
there exists $r_0\in(0,1)$ such that, for all $r\in(r_0,1)$,
$$
\delta_0\ge
r^\gamma\frac{1-r^{1-\gamma}}{1-r^{1+\gamma}} \ \text{ or }
r^\gamma\le \frac{r+\delta_0}{1+\delta_0r}.
$$
Therefore, if (v) holds, then $v(r^\gamma)=O(v(r))$ as $r\to1^-$, which implies (vi).

Conversely, putting $\gamma=1/2$ and $\delta_0=1/4<(1-\gamma)/(1+\gamma)$, we see that, for all $r$ close to 1 on the left,
$$
\sqrt{r}\ge\dfrac{r+\frac14}{1+\frac14r}.
$$
From this it follows easily that (vi) implies (v) with $\delta_0=1/4$.

Let now $v$ is log-convex.

(iv)$\Longrightarrow$ (i).
Given $r\in[1/2,1)$, choose $n\in\mathbb N$ so that
$1-2^{-n} \leq r < 1-2^{-n-1}$.
Using that $v$ is log-convex, we have
\begin{align*}
r\,(\log v(r))'&\le
(1-2^{-n-1})(\log v)'(1-2^{-n-1}) \\
& \leq
\dfrac{\log(v(1-2^{-n-2})/v(1-2^{-n-1}))}{\log((1-2^{-n-2})/(1-2^{-n-1}))} \\
&\le \dfrac{\log A}{\log((1-2^{-n-2})/(1-2^{-n-1}))}=
\dfrac{\log A}{\log\left(1+\frac{2^{-n-2}}{1-2^{-n-1}}\right)},
\end{align*}
where $A:=\sup\limits_{n} \dfrac{v(1-2^{-n-1})}{v(1-2^{-n})}$.

Since
$$
\dfrac{2^{-n-2}}{1-2^{-n-1}}\ge\dfrac{1-r}{2(1+r)},
$$
it then follows that
$$
(1-r)\dfrac{v'(r)}{v(r)}\le
\dfrac{(1-r)\log A}{r\log\left(1+\dfrac{1-r}{2(1+r)}\right)}\rightarrow 4 \log A
\ \text{ as } r\to1^-.
$$
Thus, (i) holds.
Consequently, for a log-convex weight $v$, conditions (i)--(vi) are all equivalent.
\end{proof}
\begin{rem}\label{r-mgw}
For a log-convex increasing function $v:[0,1)\to(0,\infty)$ condition (iv) of Lemma~\ref{lem-d} means that $\log v$ is of \textit{moderate growth} in the sense of Horowitz \cite[Definition~2]{Hor-95}.
\end{rem}
\begin{thm}\label{thm-d-d}
For a log-convex weight $v$ on $\mathbb{D}$ and $w(r):=v(r)/(1-r)$ assertions (i)--(vi) of Lemma~\ref{lem-d} are equivalent to:
\begin{enumerate}
\item[(vii)] The differentiation operator $D: H_v(\mathbb D)\to H_w(\mathbb D)$ is continuous.
\end{enumerate}
\end{thm}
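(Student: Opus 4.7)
The plan is to append condition~(vii) to the chain of equivalences for log-convex $v$ from Lemma~\ref{lem-d}, which reduces to proving (vi)$\Rightarrow$(vii) and (vii)$\Rightarrow$(vi); the other equivalences then come for free.

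For (vi)$\Rightarrow$(vii), I would reduce to Corollary~\ref{cor-su-r}(1) by verifying the Cauchy-kernel estimate $v\bigl(\tfrac{1+r}{2}\bigr)=O(v(r))$ as $r\to 1^-$. By Lemma~\ref{lem-d}, (vi) upgrades to the M\"obius invariance (v): $v\bigl(\tfrac{r+\delta_0}{1+\delta_0 r}\bigr)=O(v(r))$ for some $\delta_0\in(0,1)$. Choosing $\delta_0=1/2$ and checking by a short calculation that $\tfrac{r+1/2}{1+r/2}\ge\tfrac{1+r}{2}$ on $[0,1)$, monotonicity of $v$ produces $v\bigl(\tfrac{1+r}{2}\bigr)\le v\bigl(\tfrac{r+1/2}{1+r/2}\bigr)=O(v(r))$, and Corollary~\ref{cor-su-r}(1) concludes.

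For (vii)$\Rightarrow$(vi), I would apply Corollary~\ref{cor-ne} to produce $(1-r)\widetilde v'(r)=O(v(r))$ as $r\to 1^-$. The awkward mixture of $v$ and $\widetilde v$ is dissolved by the essential-weight identity $v\asymp\widetilde v$ that holds for radial log-convex weights: writing $v$ as the Legendre-type envelope $V(r):=\sup_n r^n/A_n$ with $A_n:=\inf_{r\in[0,1)}(v(r)/r^n)$, log-convexity of $\log v(e^x)$ yields $v(r)\asymp V(r)$ (an integer-slope tangent-line argument), while the monomials $z^n/A_n$ lie in the unit ball of $H_v(\mathbb D)$, so that $V\le\widetilde v\le v$. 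Substituting turns the operator-side estimate into $(1-r)\widetilde v'(r)=O(\widetilde v(r))$, which is condition~(i) for $\widetilde v$. Because $\widetilde v$ is itself log-convex, Lemma~\ref{lem-d} applied to $\widetilde v$ delivers (vi) for $\widetilde v$, and one application of $v\asymp\widetilde v$ transports (vi) back to $v$.

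The principal obstacle is the essentiality $v\asymp\widetilde v$: it is the only nonformal bridge between the operator-theoretic estimate of Corollary~\ref{cor-ne}, which inevitably produces $\widetilde v$ on the left, and an intrinsic growth condition on $v$ itself. Without this comparison one would be stuck integrating the mixed inequality and unable to recover a doubling-type bound on $v$. Once $v\asymp\widetilde v$ is in hand, the rest is a routine combination of Corollaries~\ref{cor-su-r} and~\ref{cor-ne}, an elementary M\"obius comparison, and the already-established equivalences of Lemma~\ref{lem-d}.
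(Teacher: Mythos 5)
Your proposal has the same skeleton as the paper's proof: necessity is extracted from Corollary~\ref{cor-ne} together with the essentiality $v\sim\widetilde v$ of log-convex radial weights and Lemma~\ref{lem-d} applied to the log-convex weight $\widetilde v$, while sufficiency is reduced to the hypothesis of Corollary~\ref{cor-su-r}(1). Two genuine deviations are worth recording. For sufficiency the paper enters the chain at (i), integrating $(\log v)'\le C/(1-r)$ via log-convexity to obtain $v\left(\tfrac{1+r}{2}\right)=O(v(r))$; you enter at (v)/(vi) using only monotonicity of $v$, so your implication from (iii)--(vi) to (vii) is in fact valid for arbitrary radial weights, log-convexity being needed only to close the loop (iv)$\Rightarrow$(i) inside Lemma~\ref{lem-d}. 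For necessity the paper simply cites Lemma~5 of \cite{BDL-99} for $v\sim\widetilde v$, while you reprove it via the monomial envelope; that is a legitimate, self-contained alternative.

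There are, however, two repairable slips, the first of which is a genuine logical gap as written. Condition (v) is existential, and the proof of (vi)$\Rightarrow$(v) in Lemma~\ref{lem-d} produces only $\delta_0=1/4$; you cannot simply ``choose $\delta_0=1/2$''. The choice matters quantitatively: $\tfrac{r+\delta}{1+\delta r}\ge\tfrac{1+r}{2}$ holds if and only if $\delta\ge 1/(2+r)$, so the comparison fails for every $\delta_0\le 1/3$ even asymptotically, and by the AM--GM inequality $\sqrt r\le\tfrac{1+r}{2}$, so a single application of (vi) is likewise insufficient. The fix is one line: iterate (vi) to get $v(r)=O(v(r^4))$, i.e. $v(r^{1/4})=O(v(r))$, and note that $\left(\tfrac{1+r}{2}\right)^4\le r$ on $[1/3,1)$ (the function $\log r-4\log\tfrac{1+r}{2}$ decreases there to the value $0$ at $r=1$), whence $v\left(\tfrac{1+r}{2}\right)\le v\left(r^{1/4}\right)=O(v(r))$; alternatively, composing the M\"obius map with itself upgrades $\delta_0$ to $2\delta_0/(1+\delta_0^2)$, and iteration pushes it past $1/2$. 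Second, your envelope is inverted: with $A_n:=\inf_{r} v(r)/r^n$ one has $\|z^n\|_v=1/A_n$, so the unit-ball functions are $A_nz^n$, not $z^n/A_n$, and the correct minorant is $V(r)=\sup_n A_nr^n\le\widetilde v(r)\le v(r)$; as written, $\sup_n r^n/A_n$ is not comparable to $v$ (for $v(r)=(1-r)^{-a}$ it remains bounded). With these corrections the integer-slope tangent argument gives $V(r)\ge r\,v(r)$, hence $v\asymp\widetilde v$, and the rest of your necessity argument runs exactly as in the paper.
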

\begin{proof}
By Lemma~\ref{lem-d}, it is enough to check that condition (vii) implies one of conditions (i)--(vi) and vise versa.

(vii)$\Longrightarrow$ (iv).
 By Corollary \ref{cor-ne}, condition (vii) implies that
\begin{equation}\label{eq-wv}
\limsup_{r \to 1^-} \dfrac{(1-r)\widetilde{v}'(r)}{v(r)} <\infty.
\end{equation}
Next, by \cite[Lemma~5]{BDL-99} (see also \cite[Subsection~4.3]{AD-15}), weights $v$ and $\widetilde v$ are equivalent ($v\sim\widetilde v$), that is, there exists a constant
$C\ge1$ such that $\widetilde{v}(r) \leq v(r) \leq C \widetilde{v}(r)$
for all $r \in [0,1)$.
Consequently, \eqref{eq-wv} is equivalent to
\begin{equation*}
\limsup_{r \to 1^-} \dfrac{(1-r)\widetilde{v}'(r)}{\widetilde{v}(r)} <\infty.
\end{equation*}
Then, by Lemma \ref{lem-d},  $1/\widetilde{v}$ satisfies condition $(*)$:
$$
 \sup_{n} \dfrac{\widetilde{v}(1-2^{-n-1})}{\widetilde{v}(1-2^{-n})} < \infty,
$$
which, in view of $v\sim\widetilde v$, is equivalent to (iv).

(i)$\Longrightarrow$(vii):
From (i) it follows that there is a constant $C>0$ such that
$$
(\log v(r))'\le\dfrac{C}{1-r}\ \ \text{ for all } r\in[0,1).
$$
Using that $v$ is log-convex,  we then have, for all $r \in [1/2,1)$,
\begin{align*}
&\log v\left(\frac{1+r}{2}\right)-\log v(r)\le (\log v)'\left(\frac{1+r}{2}\right)\dfrac{1+r}{2}\log\frac{1+r}{2r} \\
& \leq C \dfrac{1+r}{1-r} \log \left( 1 + \dfrac{1-r}{2r} \right) \leq C\dfrac{1+r}{2r}\le\frac32 C.
\end{align*}
Taking $M:=\max\{e^{3C/2}, v(3/4)/v(0)\}$, we obtain
$$
 v\left( \dfrac{1+r}{2}\right) \leq M v(r)\ \text{ for all } r \in [0,1),
$$
and, consequently,
$$
\dfrac{1}{1-r} v\left( \dfrac{1+r}{2}\right) \leq M w(r)\  \text{ for all } r \in [0,1).
$$
Applying Corollary \ref{cor-su-r}, we obtain (vii).
\end{proof}
Note that Theorem \ref{thm-d-d} might fail for weights of a general type that are not log-convex. To see this, it is enough to consider the following example.

\begin{exa}\label{ex-1}
Let $(a_n)$ and $(b_n)$ be two sequences of positive numbers with the following properties:
\begin{align*}
&(1)\quad \sum_{n=1}^{\infty}(a_n+b_n) =: M<\infty;
&(2)\quad a_n < b_n,\ \forall n \geq 2; \\
&(3)\quad (a_n+b_n)\downarrow 0 \text{ as } n \rightarrow \infty;
&(4)\quad \lim_{n\to \infty} \dfrac{a_n}{b_n}  = 0;\\
&(5)\quad \lim_{n\to \infty} \dfrac{\sum_{k > n}(a_k+b_k)}{a_n+b_n} = L \in (0,+\infty).
\end{align*}
For instance, we can take $a_n = 3^{-n},\ b_n = 2^{-n} - 3^{-n}$ ($n \in \mathbb{N}$).

 Define a function $\varphi: (-\infty,0) \to [0,+\infty)$ in the following way.
Denote
$$
S_n:= - \sum_{k>n}(a_k+b_k), \ \ n \geq 0.
$$
By property (1),  $S_n \uparrow 0$ as $n \to \infty$.
Put
\begin{enumerate}
\item[*] $\varphi(t): = 1$ for $t \in (-\infty, S_0 + a_1]$ and $\varphi(t): = \dfrac{1}{b_1}(t- S_0 - a_1) + 1$ for $t \in (S_0 + a_1,S_1]$;
\item[*] for $n \geq 2$,
$$
\varphi(t) := \dfrac{1}{a_n}(t - S_{n-1})+2n-2, \ \ t\in (S_{n-1},S_{n-1}+a_n],
$$
and
$$
\varphi(t): = \dfrac{1}{b_n} (t - S_{n-1}-a_n) + 2n-1, \ \ t\in (S_{n-1}+a_n, S_n].
$$
\end{enumerate}
Evidently, $\varphi$ is increasing and continuous on $(-\infty,0)$. Next, for every $n \in \mathbb{N}$,
$$
\varphi'(t)= 1/a_n \text{ on } (S_{n-1},S_{n-1}+a_n)
\text{ and }
\varphi'(t)= 1/b_n \text{ on } (S_{n-1}+a_n, S_n).
$$
From this and property (2) it then follows that the function $\varphi$ is not convex on $(-\infty,0)$.
Denote by $\overline{\varphi}$ the largest convex minorant of the function $\varphi$.
Using property (3), it is easy to see that
$$
\overline{\varphi}(t) = \dfrac{2}{a_n+b_n}(t - S_{n-1})+2n-2 \ \text{ for } t\in[S_{n-1},S_n], n\in\mathbb N.
$$

Obviously, the following two functions
$$
v(r):= \exp(\varphi(\log r)) \text{ and }
\overline{v}(r):= \exp(\overline{\varphi}(\log r)),\  r \in [0,1),
$$
are radial weights on $\mathbb D$.

A simple calculation gives that, for every $n \in \mathbb{N}$,
$$
\overline{V}(r):=\dfrac{(1-r)\overline{v}'(r)}{\overline{v}(r)} = \dfrac{2(r^{-1}-1)}{a_n+b_n} \ \text{ for all } r\in (e^{S_{n-1}},e^{S_n}).
$$
Hence,
$$
\max_{(e^{S_{n-1}},e^{S_n})}\overline{V}(r) = \frac{2(e^{-S_{n-1}}-1)}{a_n+b_n}\ \text{ for every } n\in\mathbb N.
$$
Using the definition of $(S_n)$ and property (5), we get
$$
\lim_{n \to \infty}\max_{(e^{S_{n-1}},e^{S_n})}\overline{V}(r) = 2(L+1).
$$
Therefore
$$
\limsup_{r\to 1^-}\dfrac{(1-r)\overline{v}'(r)}{\overline{v}(r)} < \infty.
$$
Taking now  $\overline{w}(r): = \overline{v}(r)/(1-r)$ and applying Theorem \ref{thm-d-d} to the log-convex weight $\overline{v}$, we conclude that the operator $D: H_{\overline{v}}(\mathbb{D}) \to H_{\overline{w}}(\mathbb{D})$ is continuous.
Since $\widetilde{v}(r) \leq \overline{v}(r) \leq v(r)$ on $[0,1)$ and $H_{\widetilde{v}}(\mathbb{D}) = H_{v}(\mathbb{D})$ isometrically (see, e.g., \cite[1.12]{BBT-98}), $H_{\overline{w}}(\mathbb{D})$ is continuously embedded in $H_w(\mathbb{D})$ and $H_{v}(\mathbb{D}) = H_{\overline{v}}(\mathbb{D})$ isometrically.
Consequently, $D: H_{v}(\mathbb{D}) \rightarrow H_w(\mathbb{D})$ is continuous.

On the other hand, for every $n \in \mathbb{N}$,
$$
V(r):=\dfrac{(1-r)v'(r)}{v(r)} = \dfrac{r^{-1}-1}{a_n} \ \text{ for all } r\in (e^{S_{n-1}},e^{S_{n-1}+a_n}).
$$
In particular,
$$
V(e^{S_{n-1}+a_n/2})=
\frac{e^{-S_{n-1}-a_n/2}-1}{a_n}\geq
\frac{e^{b_n}-1}{a_n}\geq
\frac{b_n}{a_n}.
$$
From this and property (4) it then follows that
$$
\limsup_{r\to 1^-}\frac{(1-r)v'(r)}{v(r)}=\infty.
$$
\end{exa}

Consider now the case of entire functions.

\begin{thm}\label{thm-d-c}
Let $v$ be a log-convex weight on $\mathbb{C}$.
Then the following conditions are equivalent:
\begin{enumerate}
\item[(i)] The operator $D: H_v(\mathbb{C}) \to H_v(\mathbb{C})$ is continuous.
\item[(ii)] $\displaystyle \limsup_{ r \to \infty} \dfrac{v'(r)}{v(r)} < \infty$.
\item[(iii)] $\log v(r) = O(r)$ as $r \to \infty$.
\end{enumerate}
\end{thm}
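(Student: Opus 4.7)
Following the pattern of Theorem~\ref{thm-d-d}, I would establish the cycle (i)$\Rightarrow$(iii)$\Rightarrow$(i) using Corollaries~\ref{cor-ne} and \ref{cor-su-r}(2), and handle (ii)$\Leftrightarrow$(iii) as a self-contained lemma about log-convex functions that does not involve the operator $D$ at all.

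For (i)$\Rightarrow$(iii): Corollary~\ref{cor-ne} with $w=v$ yields $\limsup_{r\to\infty}\widetilde v'(r)/v(r)<\infty$. The log-convexity of $v$ gives $v\sim\widetilde v$ by the $\mathbb C$-analogue of \cite[Lemma~5]{BDL-99} used already in the proof of Theorem~\ref{thm-d-d}, so $\widetilde v'/\widetilde v$ is bounded for large $r$; integrating and invoking $v\sim\widetilde v$ one more time yields $\log v(r)=O(r)$. For (iii)$\Rightarrow$(i): by Corollary~\ref{cor-su-r}(2) it suffices to show $v(1+r)\le Cv(r)$ for all $r\ge 0$. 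Writing $g(x):=\log v(e^x)$ (convex by hypothesis), the convexity of $g$ gives
\[
 g(\log(1+r))-g(\log r)\le g'(\log(1+r))\log\bigl(1+\tfrac{1}{r}\bigr),
\]
and since $(1+r)\log(1+1/r)$ is bounded for $r\ge 1$, the desired inequality will follow from the bound $g'(x)\le Me\cdot e^x$ obtained in the next paragraph; the interval $r\in[0,1]$ is dispatched by continuity and positivity of $v$.

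The equivalence (ii)$\Leftrightarrow$(iii) is a transparent fact about the convex function $g$: since $g'(x)/e^x = v'(e^x)/v(e^x)$, condition (ii) says $g'(x)/e^x$ is bounded, while (iii) says $g(x)/e^x$ is bounded for large $x$ (note that $\log v>0$ eventually because $v\to\infty$). The implication (ii)$\Rightarrow$(iii) is immediate by integrating $g'(x)\le Me^x$. For the converse I would exploit the convexity via $g'(x)\le g(x+1)-g(x)\le g(x+1)\le Me^{x+1}$, yielding $g'(x)\le Me\cdot e^x$, the very bound fed back into step (iii)$\Rightarrow$(i). The only mildly delicate point in the whole argument is the transfer from $\widetilde v$ back to $v$ in step (i)$\Rightarrow$(iii): the log-convexity is exactly what guarantees $v\sim\widetilde v$, and this is where one expects Example~\ref{ex-1}-style counterexamples to live if the log-convex hypothesis is dropped.
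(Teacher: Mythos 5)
Your steps (ii)$\Leftrightarrow$(iii) and (iii)$\Rightarrow$(i) are correct and essentially coincide with the paper's (iii)$\Rightarrow$(ii) and (ii)$\Rightarrow$(i): the paper too exploits convexity of $g(x)=\log v(e^x)$ to get $v'(r)/v(r)\le(\log v(er)-\log v(r))/r$ and to bound $\log(v(r+1)/v(r))$ before invoking Corollary~\ref{cor-su-r}(2). But your step (i)$\Rightarrow$(iii) rests on a false lemma. There is no ``$\mathbb{C}$-analogue'' of \cite[Lemma~5]{BDL-99}: the equivalence $v\sim\widetilde v$ for log-convex radial weights is a phenomenon of the \emph{disc}, and it fails on $\mathbb{C}$. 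What holds in general on $\mathbb{C}$ is only $\widetilde v(r)\le v(r)\le r\,\widetilde v(r)$ for $r\ge1$ (see \cite[p.~157]{BBT-98}), and the polynomial factor cannot be removed: take $g(x)=\log v(e^x)$ piecewise linear with slopes $k+\tfrac12$ on intervals of length $L_k\to\infty$; since every $f\in B_v(\mathbb{C})$ has $|a_n|\le e^{-g^*(n)}$ with \emph{integer} $n$, at the midpoint of the $k$-th interval one gets $\widetilde v(r)\lesssim e^{-L_k/4}\,v(r)$, so $v/\widetilde v$ is unbounded. This is also why the paper must introduce CK--weights (the extra Clunie--K\"ov\'ari condition) in Section~3; if log-convexity alone implied essentialness on $\mathbb{C}$, that condition would be superfluous. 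You correctly identified this transfer as the delicate point of the proof --- and it is exactly where your argument breaks: without $v\sim\widetilde v$, Corollary~\ref{cor-ne} with $w=v$ only gives $\widetilde v'(r)\le Cv(r)\le Cr\,\widetilde v(r)$, whence merely $\log\widetilde v(r)=O(r^2)$, which is too weak for (iii).

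The gap is repairable, and the paper's proof shows how. Since $H_v(\mathbb{C})=H_{\widetilde v}(\mathbb{C})$ isometrically (and $\widetilde{\widetilde v}=\widetilde v$), boundedness of $D$ on $H_v$ is the same as on $H_{\widetilde v}$, so Corollary~\ref{cor-ne} applied with \emph{both} weights equal to $\widetilde v$ yields $\limsup_{r\to\infty}\widetilde v'(r)/\widetilde v(r)<\infty$ directly, with no comparison of $v$ and $\widetilde v$ needed. Integrating (the paper uses L'Hospital) gives $\log\widetilde v(r)\le C(r+1)$, and then the one-sided inequality $v(r)\le r\,\widetilde v(r)$ suffices, because the resulting loss $\log r$ is $O(r)$ and thus harmless for (iii). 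With this replacement your architecture --- the cycle (i)$\Rightarrow$(iii)$\Rightarrow$(i) plus the operator-free lemma (ii)$\Leftrightarrow$(iii) --- becomes a correct, mildly reorganized version of the paper's cycle (i)$\Rightarrow$(iii)$\Rightarrow$(ii)$\Rightarrow$(i).
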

\begin{proof}
(i)$\Longrightarrow$(iii): By Corollary \ref{cor-ne}, (i) implies that
$$
\limsup_{r \to \infty} \dfrac{\widetilde{v}'(r)}{\widetilde{v}(r)} < \infty.
$$
Applying the L'Hospital rule to $f(r):=\log\widetilde{v}(r)$ and $g(r):=r$, we have that
$$
\limsup_{r \to \infty} \dfrac{\log\widetilde{v}(r)}{r} < \infty.
$$
Therefore, $\log \widetilde{v}(r) \leq C (r+1)$ for some $C>0$ and all $r \geq 0$.
As is known (see \cite[Page 157]{BBT-98}), $v(r) \leq r \widetilde{v}(r)$ for all $r \geq 1$ .
Consequently, $\log v(r) \leq C (r+1) + \log r$ for all $r \geq 1$, which  implies  $(iii)$.

(iii)$\Longrightarrow$(ii): Because $v$ is increasing and log-convex, we have that, for every $r>0$,
$$
\dfrac{v'(r)}{v(r)} \leq \dfrac{\log v(er) - \log v(r)}{r} \leq e \dfrac{\log v(er) - \log v(0)}{er}.
$$
From this it follows that
$$
\limsup_{r\to\infty}\dfrac{v'(r)}{v(r)}\le e\limsup_{r\to\infty}\dfrac{\log v(r)}{r}.
$$
Thus, (iii)$\Longrightarrow$(ii).

(ii) $\Longrightarrow$ (i):
(ii) means that there is a constant $C>0$ such that $(\log v(r))'\le C$ for all $r>0$.
Using that $v$ is log-convex, we then have
$$
\log\dfrac{v(r+1)}{v(r)}\le C(r+1)\log\dfrac{r+1}{r}\le 2C \text{ for all } r\ge1.
$$
By Corollary~\ref{cor-su-r}, this implies (i).
\end{proof}
As above, we give an example showing that the use of log-convex weights in our Theorem \ref{thm-d-c} is essential. Since the explanation is similar to the one in Example \ref{ex-1}, we omit it.

\begin{exa}\label{ex-2}
Let $(a_n)$ and $(b_n)$ be two sequences of positive numbers with the following properties:
$$
(1)\ \sum_{n=1}^{\infty}(a_n+b_n) = +\infty; \qquad \quad
(2)\ a_n < b_n,\ \forall n \geq 2; \qquad \quad
$$
$$
(3)\ (a_n+b_n)\downarrow 0 \text{ as } n \rightarrow \infty; \qquad \quad
(4)\ \lim_{n\to \infty} a_n \exp\sum_{k=1}^{n-1}(a_k+b_k) = 0;
$$
$$
(5)\ \lim_{n\to \infty} (a_n+b_n) \exp\sum_{k=1}^{n-1}(a_k+b_k) = L \in (0,+\infty).
$$
For instance, we can take $a_n = 3^{-n}, b_n = \log(1+\frac{1}{n}) - 3^{-n}$ \ ($n \in \mathbb{N}$).

Denote
$$
S_n: = \sum_{k=1}^{n}(a_k+b_k), \ n \in \mathbb{N},
$$
and put
\begin{enumerate}
\item[*] $\varphi(t): = 1$ for $t \in (-\infty, a_1]$ and $\varphi(t): = \dfrac{1}{b_1}(t- a_1) + 1$ for $t \in (a_1,S_1]$;
\item[*] for $n \geq 2$,
$$
\varphi(t): = \dfrac{1}{a_n}(t - S_{n-1})+2n-2, \ \ t\in (S_{n-1},S_{n-1}+a_n],
$$
and
$$
\varphi(t): = \dfrac{1}{b_n} (t - S_{n-1}-a_n) + 2n-1, \ \ t\in (S_{n-1}+a_n, S_n].
$$
\end{enumerate}
 The function $\varphi$ is not convex on $\mathbb{R}$ and its largest convex minorant $\overline{\varphi}$ is defined by
$$
\overline{\varphi}(t): = \dfrac{2}{a_n+b_n} (t - S_{n-1}) + 2n-2,\ t \in [S_{n-1},S_n], n\in\mathbb N.
$$
Since $\overline{\varphi}'(t) = 2/(a_n+b_n)$ on  $(S_{n-1},S_n)$ for every $n \in \mathbb{N}$, $\overline{\varphi}'(t) \to +\infty$ as $t \to +\infty$.
This implies that $t = o(\overline{\varphi}(t))$ as $t \to +\infty$. Hence, $t = o(\varphi(t))$ as $t \to +\infty$, too.

From the above it follows that the functions
$v(r): =\exp(\varphi(\log r))$ and $\overline{v}(r): = \exp(\overline{\varphi}(\log r))$
are radial weights on $\mathbb C$.
Arguing as in Example \ref{ex-1}, we show that by Theorem \ref{thm-d-c} the differentiation operator $D: H_{\overline{v}}(\mathbb{C}) \to H_{\overline{v}}(\mathbb{C})$ or, equivalently, $D: H_{v}(\mathbb{C}) \to H_{v}(\mathbb{C})$ is continuous, but
$$
\limsup_{r \to \infty} \dfrac{v'(r)}{v(r)} = \infty.
$$
\end{exa}

\begin{rem}
Note that by making slight changes of $\varphi$ and $\overline{\varphi}$ in Examples \ref{ex-1} and \ref{ex-2} in the neighborhood of each angular point we may assume that $\varphi$ and $\overline{\varphi}$ are everywhere differentiable, and hence, the corresponding weights $v$ and $\overline{v}$ are weights in the sense of \cite{HL-08}. Then Examples \ref{ex-1} and \ref{ex-2} also show that the results in \cite[Theorems~3.2 and 4.1]{HL-08} cannot be stated for radial weights of a general type.
\end{rem}
\begin{rem}
In this section we actually replaced assumption (HL) by the natural one that $v$ is a log-convex weight. From this it follows, in particular, that all the results in recent papers \cite{BBF-13}, \cite{BBF-15}, \cite{B-09}, \cite{BB-13} using these theorems without mentioning  assumption (HL) remain valid, because in the cited papers it is enough to deal with associated radial weights which are always log-convex.
\end{rem}

\section{Boundedness of the integration operator}

In this section we study the integration operator $I: H_w(G) \to H_v(G)$:
$$
If(z): = \int_{\ell[z_0,z]} f(\zeta)d\zeta,\ \ z \in G,
$$
where $z_0$ is a fixed point of a simply connected domain $G$ and the integration is taken over a rectifiable curve $\ell[z_0,z]$ connecting $z_0$ and $z$.
In case $0\in G$ we can assume without loss of generality that $z_0=0$.

Similarly as in Section~2, we develop a new, more elementary than in \cite{HL-08}, method which guarantees that some previous results concerning the boundedness of the integration operator $I$ are true for log-convex weights. Moreover, we construct some examples showing that the log-convexity of weights is an essential assumption (see Example~\ref{ex-int} and Remark~\ref{rem-int}). In addition, we show that \cite[Proposition~2.2 (a)]{HL-08} is true without any restrictions (see Remark~\ref{rem-l}).

As in Section~2, we start with the following simple sufficient condition for the boundedness of the operator $I$ in the general case.
\begin{prop}\label{prop-di}
Suppose that $v$ and $w$ are two weights on $G$ with
$$
\sup_{z \in G} \dfrac{w_{in}(z)}{v(z)} < \infty,
$$
where
$$
w_{in}(z):=\inf_{\ell[z_0,z]}\int_{\ell[z_0,z]} w(\zeta)\,|d\zeta| \  \ (z\in G)
$$
and the infimum is taken over all rectifiable curves $\ell[z_0,z]$ connecting $z_0$ and $z$.
Then the operator $I: H_w(G) \to H_v(G)$ is continuous.
\end{prop}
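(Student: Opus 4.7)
The plan is to directly estimate $|If(z)|$ by the integral of $|f|$ along any rectifiable path from $z_0$ to $z$, then exploit the path-independence of $If$ (which holds because $G$ is simply connected and $f$ is holomorphic) to pass to the infimum. Concretely, I would take an arbitrary $f \in H_w(G)$ with $\|f\|_w \le 1$, so that $|f(\zeta)| \le w(\zeta)$ for every $\zeta \in G$. For any fixed $z \in G$ and any rectifiable curve $\ell[z_0,z]$ connecting $z_0$ and $z$, the standard estimate gives
\[
|If(z)| \;\le\; \int_{\ell[z_0,z]} |f(\zeta)|\,|d\zeta| \;\le\; \|f\|_w \int_{\ell[z_0,z]} w(\zeta)\,|d\zeta|.
\]

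Since $f$ is holomorphic on the simply connected domain $G$, $If(z)$ does not depend on the particular curve $\ell[z_0,z]$; the left-hand side is therefore a single number while the right-hand side varies with $\ell$. Taking the infimum over all admissible $\ell$ yields
\[
|If(z)| \;\le\; \|f\|_w\, w_{in}(z).
\]
Dividing by $v(z)$ and invoking the hypothesis $C := \sup_{z\in G} w_{in}(z)/v(z) < \infty$ produces $|If(z)|/v(z) \le C\,\|f\|_w$ for all $z \in G$, hence $\|If\|_v \le C\|f\|_w$, which is the desired continuity.

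There is no real obstacle: the only mildly delicate point is to record explicitly why the path-independence lets us replace the bound along a single curve by $w_{in}(z)$. One should also note that $w_{in}$ is finite at every $z$ (e.g. by choosing any straight polygonal path contained in $G$, using that $w$ is continuous), so the hypothesis is meaningful; and $If \in H(G)$ is automatic from the simple-connectedness of $G$.
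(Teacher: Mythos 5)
Your proof is correct and follows essentially the same route as the paper's: estimate $|If(z)|$ along an arbitrary rectifiable curve by $\|f\|_w\int_{\ell[z_0,z]}w(\zeta)\,|d\zeta|$, pass to the infimum over curves (justified, as you note, by path-independence of $If$ on the simply connected domain $G$), and divide by $v(z)$. Your added remarks on why the infimum step is legitimate and on the finiteness of $w_{in}$ are points the paper leaves implicit, but they do not change the argument.
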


\begin{proof}
For arbitrary $f \in H_w(G)$, $z \in G$ and rectifiable curve $\ell[z_0,z]$ we have
$$
|If(z)| = \left|\int_{\ell[z_0,z]} f(\zeta)d\zeta\right| \leq
\int_{\ell[z_0,z]} |f(\zeta)||d\zeta|\le
\|f\|_w \int_{\ell[z_0,z]}w(\zeta)d\zeta.
$$
From this it follows that
$$
|If(z)|\le M\|f\|_wv(z)\ \ \text{ for all } z\in G \ \text{ and } f\in H_w(G),
$$
where $M:=\displaystyle\sup_{z \in G} w_{in}(z)/v(z)$.
Thus, $\|If\|_v \leq M \|f\|_w,\ \forall f \in H_w(G)$, which completes the proof.
\end{proof}

\begin{cor}\label{cor-su-i}
Let $v$ and $w$ be two radial weights on $G=\mathbb D$ or $G=\mathbb C$ and, as before,  $a = 1$ for $G = \mathbb{D}$  and $a = +\infty$ for $G = \mathbb{C}$.
Consider the following conditions:
\begin{enumerate}
\item[(i)]
$ \displaystyle \limsup_{r \to a^-} \dfrac{w(r)}{v'(r)} < \infty.$
\item[(ii)]
$ \displaystyle \limsup_{r \to a^-} \dfrac{1}{v(r)}\int_0^rw(t)\,dt < \infty.$
\item[(iii)]
The integration operator $I: H_w(G) \to H_v(G)$ is continuous.
\end{enumerate}
Then (i)$\Longrightarrow$(ii)$\Longrightarrow$(iii).
In particular, for every radial weight $v$ on $\mathbb D$ the integration operator $I: H_v(\mathbb D)\to H_v(\mathbb D)$ is continuous.
\end{cor}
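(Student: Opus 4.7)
The statement breaks into three easy pieces: (i)$\Rightarrow$(ii), (ii)$\Rightarrow$(iii), and the ``in particular'' case, which will drop out of (ii)$\Rightarrow$(iii) by taking $w=v$. The route I intend to follow uses only two elementary facts: (a) an increasing continuous function $v$ on $[0,a)$ is of bounded variation, hence its a.e.\ derivative satisfies $\int_{r_0}^{r}v'(t)\,dt\le v(r)-v(r_0)$; and (b) the line integral of a radial weight is minimized by the radial segment. The only slightly non-obvious step is the identification of $w_{in}$ in the radial case.

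For (i)$\Rightarrow$(ii), I would pick $C>0$ and $r_0\in[0,a)$ from the limsup hypothesis so that $w(r)\le Cv'(r)$ a.e.\ on $[r_0,a)$, then integrate and divide by $v(r)$:
\[
\dfrac{1}{v(r)}\int_0^r w(t)\,dt \le \dfrac{1}{v(r)}\int_0^{r_0} w(t)\,dt + C\,\dfrac{v(r)-v(r_0)}{v(r)}.
\]
Both terms stay bounded as $r\to a^-$ because $v(r)\to\infty$: this is built into the definition of a radial weight on $\mathbb{D}$, and on $\mathbb{C}$ it follows from $\log r=o(\log v(r))$.

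For (ii)$\Rightarrow$(iii) the plan is to apply Proposition~\ref{prop-di}, so I need to compute $w_{in}$. My claim is that, for any radial weight $w$,
\[
w_{in}(z)=\int_0^{|z|} w(t)\,dt.
\]
Given a rectifiable curve $\gamma$ from $0$ to $z$, set $r(s):=|\gamma(s)|$. Then $|r(s+h)-r(s)|\le|\gamma(s+h)-\gamma(s)|$ gives $|r'(s)|\le|\gamma'(s)|$ a.e., and the coarea formula yields
\[
\int_\gamma w(\zeta)|d\zeta| \ge \int w(r(s))|r'(s)|\,ds = \int_0^\infty w(t)\,N(t)\,dt \ge \int_0^{|z|} w(t)\,dt,
\]
where $N(t):=\#\{s:r(s)=t\}\ge 1$ on $(0,|z|)$ by the intermediate value theorem; the radial segment realizes equality. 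Combining this identity with (ii) and the fact that $v$ is continuous and bounded away from $0$ on $[0,r_0]$ gives $\sup_{z\in G} w_{in}(z)/v(z)<\infty$, and Proposition~\ref{prop-di} delivers (iii).

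For the ``in particular'' claim I set $w=v$: since $v$ is increasing, $\int_0^r v(t)\,dt\le r\,v(r)\le v(r)$ on $[0,1)$, so (ii) holds with constant $1$ and (iii) follows from Step~2. The main obstacle, such as it is, will be making the inequality $\int_\gamma w|d\zeta|\ge\int_0^{|z|}w(t)\,dt$ rigorous at the stated regularity (rectifiable, not $C^1$); but after arc-length reparametrization $\gamma$ becomes Lipschitz, $r(s)=|\gamma(s)|$ is Lipschitz, and the standard coarea identity applies without fuss.
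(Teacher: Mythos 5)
Your proof is correct and follows the same skeleton as the paper's: the paper also proves (i)$\Rightarrow$(ii) by a derivative comparison (it simply invokes the L'Hospital rule in one line), deduces (ii)$\Rightarrow$(iii) from Proposition~\ref{prop-di}, and settles the ``in particular'' claim by exactly your estimate $\int_0^r v(t)\,dt\le rv(r)\le v(r)$. Two remarks on where you deviate. First, your direct integration of $w\le Cv'$ is in fact slightly more careful than the paper's L'Hospital appeal: an increasing $v$ is only a.e.\ differentiable, and the one-sided inequality $\int_{r_0}^{r}v'(t)\,dt\le v(r)-v(r_0)$ is precisely the valid tool (note also that the second term is $\le C$ outright, so the appeal to $v(r)\to\infty$ is only needed for the first term, where $v(r)\ge v(0)>0$ would already suffice). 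Second, the coarea/Banach-indicatrix identification $w_{in}(z)=\int_0^{|z|}w(t)\,dt$ --- the step you single out as the main obstacle --- is more than you need: Proposition~\ref{prop-di} requires only the \emph{upper} bound $\sup_{z\in G} w_{in}(z)/v(z)<\infty$, and
$$
w_{in}(z)\le\int_{[0,z]}w(\zeta)\,|d\zeta|=\int_0^{|z|}w(t)\,dt
$$
is immediate because the radial segment is one admissible competitor in the infimum. Your lower bound for $w_{in}$ is true, but it is dead weight in this proof, so the regularity worries about rectifiable curves never actually arise; the paper's proof of (ii)$\Rightarrow$(iii) is the one-line application of Proposition~\ref{prop-di} you could have had directly.
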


\begin{proof}
(i)$\Longrightarrow$(ii): By the L'Hospital rule.

(ii)$\Longrightarrow$(iii): By Proposition~\ref{prop-di}.

Next, for every radial weight $v$ on $\mathbb D$, $\int_0^r v(t)\,dt\le rv(r)\le v(r)$ for all $r\in[0,1)$.
Consequently, condition (ii) holds  for $w(r)=v(r)$, which implies (iii).
\end{proof}

\begin{rem}\label{rem-l}
(1) In \cite[Proposition~2.2 (a)]{HL-08} it was proved that (i)$\Longrightarrow$(iii) under the additional assumption that $H_v(G)$ is isomorphic to $l_{\infty}$.

(2) It is easy to see that, in general, (ii) does not imply (i).
For instance, consider the following radial weight $v(r):=v(r_n)+\gamma_n (r-r_n), \ r_n<r\le r_{n+1}, \ n\in\mathbb N_0: = \mathbb{N} \cup \{0\}$, where $r_0=0$, $r_n\uparrow 1$, $v(r_0)=1$ and $\gamma_{2n}(r_{2n+1}-r_{2n})\to +\infty$ as $n\to\infty$, while $\gamma_{2n-1}=1$ for all $n\in\mathbb N$.
Then, for $t_n:=(r_{2n-1}+r_{2n})/2$,
$$
\dfrac{v(t_n)}{v'(t_n)}\ge v(r_{2n-1})\to +\infty \ \text{ as } n\to \infty.
$$
Thus,  condition (i) does not hold for $w(r) = v(r)$, while (ii) holds trivially by the proof of Corollary~\ref{cor-su-i}.
In particular, this shows that  \cite[Proposition~2.2(b)]{HL-08} might fail for a radial weight of a general type.
\end{rem}

In view of Corollary~\ref{cor-su-i} and Remark~\ref{rem-l} it is natural to discuss when the condition (ii) in Corollary~\ref{cor-su-i} is necessary for the boundedness of the corresponding integration operator.

We say that a radial weight $w$ on $G=\mathbb D$ or $G=\mathbb C$ belongs to the class $R_G$ if there exist $c>0$ and $r_0\in(0,a)$ (as before, $a=1$ or $a=+\infty$) such that for every $r\in(r_0,a)$ there is a function $f_r$ in the unit ball $B_w(G)$ with
\begin{equation}\label{eq-saw}
f_r(t)\ge cw(t) \ \ \text{ for all } t\in[r_0,r].
\end{equation}

\begin{prop}\label{prop-nci}
Let $G=\mathbb D$ or $G=\mathbb C$,  $w\in R_G$ and $v$ be an arbitrary radial weight on $G$.
The integration operator $I: H_w(G)\to H_v(G)$ is continuous if and only if condition (ii) of Corollary~\ref{cor-su-i} holds.
\end{prop}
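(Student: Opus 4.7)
The sufficiency direction (ii)$\Longrightarrow$(iii) is already contained in Corollary~\ref{cor-su-i}, so the only thing to prove is necessity. My plan is to test the continuous operator $I$ on the family $\{f_r\}_{r\in(r_0,a)}\subset B_w(G)$ supplied by the definition of $R_G$, evaluated at a suitably chosen point on the real axis.

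Assume that $I:H_w(G)\to H_v(G)$ is continuous with $\|I\|=M$, and fix $c>0$, $r_0\in(0,a)$ and the family $\{f_r\}$ as in the definition of $R_G$. For each $r\in(r_0,a)$, I will evaluate $I f_r$ at the real point $z=r\in G$. Since $G$ is simply connected and $0,r\in G$, the integral is path-independent and can be taken along $[0,r]$, giving
\[
I f_r(r)=\int_0^{r_0}f_r(t)\,dt+\int_{r_0}^{r} f_r(t)\,dt.
\]
Continuity of $I$ yields $|I f_r(r)|\le M v(r)$. On $[0,r_0]$, the bound $\|f_r\|_w\le 1$ together with the monotonicity of $w$ gives $|f_r(t)|\le w(t)\le w(r_0)$, so the first integral has modulus at most $r_0 w(r_0)$. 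By \eqref{eq-saw}, on $[r_0,r]$ the values $f_r(t)$ are real and at least $cw(t)>0$, so the second integral is a positive real number bounded below by $c\int_{r_0}^{r}w(t)\,dt$.

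Combining these three facts through the triangle inequality,
\[
c\int_{r_0}^{r}w(t)\,dt\;\le\;\int_{r_0}^{r} f_r(t)\,dt\;\le\;|I f_r(r)|+\left|\int_0^{r_0} f_r(t)\,dt\right|\;\le\; M v(r)+r_0 w(r_0).
\]
Adding the trivial estimate $\int_0^{r_0} w(t)\,dt\le r_0 w(r_0)$ and dividing by $v(r)$ leads to
\[
\frac{1}{v(r)}\int_0^{r}w(t)\,dt\;\le\;\frac{M}{c}+\frac{r_0 w(r_0)\left(1+\tfrac1c\right)}{v(r)},
\]
and since $v(r)\to\infty$ as $r\to a^-$ for every radial weight on $\mathbb D$ or $\mathbb C$, passing to $\limsup$ yields condition (ii) with bound $M/c$.

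The main obstacle, and the only nontrivial step, is the passage from the complex inequality $|I f_r(r)|\le Mv(r)$ to a bound on the real positive quantity $c\int_{r_0}^{r}w(t)\,dt$: in general one loses information by taking moduli. What saves the argument is precisely the membership $w\in R_G$, which forces $f_r$ to take real nonnegative values on $[r_0,r]$, so that the integral over $[r_0,r]$ is already real and positive and the complex part of $I f_r(r)$ is absorbed into the controllable boundary piece $\int_0^{r_0} f_r(t)\,dt$.
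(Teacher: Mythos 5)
Your proof is correct and follows essentially the same route as the paper's: test $I$ on the family $\{f_r\}$ from the definition of $R_G$, split the integral at $r_0$, bound the piece over $[0,r_0]$ by $r_0w(r_0)$ via $\|f_r\|_w\le1$ and the monotonicity of $w$, and use \eqref{eq-saw} to bound $\int_{r_0}^r w(t)\,dt$ from above by $\bigl(Mv(r)+r_0w(r_0)\bigr)/c$. Your extra remarks on path-independence and on $v(r)\to\infty$ as $r\to a^-$ merely make explicit what the paper leaves implicit, so there is nothing to correct.
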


\begin{proof}
In view of Corollary~~\ref{cor-su-i} it is enough to prove the necessity. Let $I: H_w(G)\to H_v(G)$ be continuous. Then, for some $A\in(0,\infty)$,
\begin{equation}\label{eq-ivw}
\|If\|_v\le A \ \ \text{ for all } f\in B_w(G).
\end{equation}
Using that $w\in R_G$ and taking $r_0, c$ and $f_r$ as in the definition of $R_G$, we have for $r\in(r_0,a)$
\begin{eqnarray*}
A&\ge&
\dfrac{1}{v(r)}\left|\int_0^r f_r(t)\,dt\right|\ge
\dfrac{1}{v(r)}\int_{r_0}^r f_r(t)\,dt - \dfrac{r_0w(r_0)}{v(r)}\\
&\ge&
\dfrac{c}{v(r)}\int_{r_0}^r w(t)\,dt - \dfrac{r_0w(r_0)}{v(r)}\\
&\ge&
\dfrac{c}{v(r)}\int_{0}^r w(t)\,dt - \dfrac{(c+1)r_0w(r_0)}{v(r)}.
\end{eqnarray*}
This implies that
\[
\limsup_{r \to a^-} \dfrac{1}{v(r)}\int_0^rw(t)\,dt \le \dfrac{A}{c}< \infty,
\]
which completes the proof.
\end{proof}

In Clunie--K\"ov\'ari \cite{ClKo-68} for $G=\mathbb C$ and in Horowitz \cite{Hor-95} and Bierstedt--Bonet--Taskinen \cite{BBT-98} for $G=\mathbb D$ there were established some conditions on a radial weight $w$ under which there exist $f\in B_w(G)$, $r_0\in[0,a)$ and $c>0$ such that
\[
f(t)\ge cw(t) \ \ \text{ for all } t\in[r_0,a).
\]
Denote by $CR_G$ the family of all radial weights on $G$ having the last property.
Clearly, $CR_G\subset R_G$.

By \cite[Theorem~4]{ClKo-68}, every log-convex weight $w$ on $\mathbb C$ can be represented in the form
\begin{equation}\label{eq-lcw}
w(r)=w(1)\exp\int_1^r \dfrac{\omega(\rho)}{\rho}\,d\rho, \ \ r\ge1,
\end{equation}
with some positive increasing function $\omega$.
We say that $w$ is a \textit{CK--weight} on $\mathbb C$ if $w$ is log-convex and, for $\omega$ as in \eqref{eq-lcw}, there is $c>1$ with $\omega(cr)-\omega(r)\ge 1$ for all $r\ge1$.
By \cite[Theorem~4]{ClKo-68} (see also the proof of Proposition~3.1(b) in \cite{BBT-98}), for every CK--weight $w$ there exists an entire function $f(z)=\sum_{n=0}^\infty a_nz^n$ with $a_n\ge0 \ (n\in\mathbb N_0)$ such that
$$
1\le\dfrac{w(r)}{M(f,r)}\le C\ \text{ for some } C \ge 1 \text{ and all } r\ge 1.
$$
As usual, $M(f,r):=\max_{|z|=r}|f(z)|$.
Hence, $f\in B_w(\mathbb C)$ and, since $M(f,r)=\sum_{n=0}^\infty a_nr^n=f(r)$, \ $f(r)\ge w(r)/C$ for all $r\ge1$.
Consequently, $w\in CR_{\mathbb C}$ and, applying Proposition~\ref{prop-nci}, we obtain

\begin{cor}\label{cor-cic}
Let $w$ be a CK--weight and $v$ an arbitrary radial weight on $\mathbb C$.
Then the integration operator $I: H_w(\mathbb C)\to H_v(\mathbb C)$ is continuous if and only if condition (ii) of Corollary~\ref{cor-su-i} holds.
\end{cor}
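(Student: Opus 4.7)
The plan is to observe that this corollary is an immediate application of Proposition~\ref{prop-nci}, so the entire task reduces to verifying that every CK--weight on $\mathbb{C}$ belongs to the class $R_{\mathbb{C}}$. The sufficiency direction (condition~(ii) implies continuity) requires nothing about $w$ beyond being a radial weight and is already contained in Corollary~\ref{cor-su-i}, so the essential content is the necessity, which is exactly what Proposition~\ref{prop-nci} supplies under the hypothesis $w\in R_{\mathbb{C}}$.

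To establish $w\in R_{\mathbb{C}}$, I would invoke the Clunie--K\"ov\'ari construction (\cite{ClKo-68}, Theorem~4; see also the proof of Proposition~3.1(b) in \cite{BBT-98}), applied to the representation~\eqref{eq-lcw} of the log-convex weight $w$. The CK--hypothesis $\omega(cr)-\omega(r)\ge 1$ for some $c>1$ is precisely the growth condition under which that theorem produces a single entire function
\[
f(z)=\sum_{n=0}^{\infty}a_n z^n,\qquad a_n\ge 0,
\]
whose maximum modulus is two-sided comparable to $w$ on $[1,\infty)$: $1\le w(r)/M(f,r)\le C$ for some $C\ge 1$ and all $r\ge 1$.

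The key (trivial but crucial) observation is that non-negativity of the coefficients forces $M(f,r)=\sum_{n=0}^{\infty}a_n r^n=f(r)$ on the positive real axis. The double inequality therefore reads $w(r)/C\le f(r)\le w(r)$ for all $r\ge 1$. The upper estimate $f(r)\le w(r)$ (together with $M(f,r)=f(r)$) places $f$ in the unit ball $B_w(\mathbb C)$, while the lower estimate $f(r)\ge w(r)/C$, specialised to $r_0=1$, $c=1/C$, and with $f_r:=f$ independent of $r$, is a strengthening of the defining inequality~\eqref{eq-saw} of the class $R_{\mathbb{C}}$ (in fact it witnesses $w\in CR_{\mathbb{C}}$, which is what matters).

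With $w\in R_{\mathbb{C}}$ in hand, Proposition~\ref{prop-nci} applies with $G=\mathbb{C}$ and arbitrary radial $v$, giving the desired equivalence. The one non-trivial ingredient is thus the Clunie--K\"ov\'ari theorem producing an entire function with non-negative Taylor coefficients and prescribed log-convex growth; once this is invoked, the argument is a direct appeal to Proposition~\ref{prop-nci}, so I do not expect a genuine obstacle beyond correctly identifying that the CK--weight hypothesis is tailored precisely so that this classical construction is available.
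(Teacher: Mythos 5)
Your proposal is correct and follows essentially the same route as the paper: the paper also reduces the corollary to Proposition~\ref{prop-nci} by showing that the Clunie--K\"ov\'ari theorem (via the representation \eqref{eq-lcw} and the hypothesis $\omega(cr)-\omega(r)\ge 1$) yields an entire function $f$ with nonnegative Taylor coefficients satisfying $1\le w(r)/M(f,r)\le C$ for $r\ge1$, whence $M(f,r)=f(r)$ places a normalization of $f$ in $B_w(\mathbb C)$ with $f(r)\ge w(r)/C$, so that $w\in CR_{\mathbb C}\subset R_{\mathbb C}$. Your identification of the single function $f_r:=f$ as the witness for \eqref{eq-saw} is exactly the paper's argument.
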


We say that a radial function $w$ on $\mathbb D$ is a \textit{BBT--weight} if $w$ can be represented in the form
\begin{equation}\label{eq-bbtw}
w(r)=w(0)\exp\int_0^r \dfrac{\tau(\rho)}{1-\rho}\,d\rho, \ \ 0\le r<1,
\end{equation}
where $\tau$ is a positive increasing function on $[0,1)$ such that, for some $c>1$ and all $\rho\in[0,1)$, $\tau(1-(1-\rho)/c)-\tau(\rho)\ge1$.
It is easy to see that every function of the form \eqref{eq-bbtw} is log-convex on $[0,1)$.
Using \cite[p.~160--161]{BBT-98} and arguing as above for a CK--weight, we have that every BBT--weight belongs to $CR_{\mathbb D}$.

Next, let $w:[0,1)\to(0,\infty)$ be an increasing log-convex function.
Following \cite[Definition~4]{Hor-95}, $\log w$ is called \textit{rapidly growing} if $\log w$ is not of moderate growth (see Remark~\ref{r-mgw}) and $\log w$ is a linear function of $\log r$ between the points $r_n=\exp(-2^{-n}) \ (\approx 1-2^{-n})$.
We say that $w$ is an \textit{H--weight} on $\mathbb D$ if $\log w$ is of moderate growth or rapidly growing.

\begin{lem}\label{lem-Hor}
Each H--weight on $\mathbb D$ belongs to $CR_{\mathbb D}$.
\end{lem}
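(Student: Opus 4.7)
The plan is to separate the two sub-cases built into the definition of an H-weight and treat each by a different argument.

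\textbf{Case (a): $\log w$ has moderate growth.} Since $w$ is log-convex, Remark~\ref{r-mgw} identifies this with condition (iv) of Lemma~\ref{lem-d}, and Lemma~\ref{lem-d} then puts all six equivalent conditions (i)--(vi) at our disposal. I would reduce this case to the result established just above the present lemma, that every BBT-weight lies in $CR_{\mathbb D}$, by producing a BBT-weight $\widetilde w\asymp w$. Writing $w(r)=w(0)\exp\int_0^r\eta(\rho)/(1-\rho)\,d\rho$ with $\eta(\rho):=(1-\rho)(\log w)'(\rho)$, the moderate-growth hypothesis is precisely the boundedness of $\eta$ on $[0,1)$. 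Replacing $\eta$ by the non-decreasing majorant $\tau(\rho):=\sup_{0\le s\le\rho}\eta(s)$ and defining $\widetilde w$ by \eqref{eq-bbtw}, a dyadic comparison (using condition (iv)) should yield $w\asymp\widetilde w$, while the spread condition $\tau(1-(1-\rho)/c)-\tau(\rho)\ge 1$ for some $c>1$ follows from $w(r)\to\infty$ as $r\to 1^-$ (forcing $\int_0^1\tau(\rho)/(1-\rho)\,d\rho=\infty$) combined with the boundedness of~$\tau$.

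\textbf{Case (b): $\log w$ is rapidly growing.} On each interval $[r_n,r_{n+1}]$, $r_n=\exp(-2^{-n})$, we have $\log w(r)=p_n\log r+\beta_n$ with $p_n\ge 0$ non-decreasing by log-convexity. Setting $b_n:=e^{\beta_n}=w(r_n)r_n^{-p_n}$, the subgradient inequality for the convex function $x\mapsto\log w(e^x)$ yields the uniform termwise bound
\[
b_n r^{p_n}\le w(r)\quad\text{for all }r\in[0,1) \text{ and all } n\in\mathbb N_0.
\]
The candidate dominating function would be
\[
f(z):=\sum_{n\ge n_0}\lambda_n b_n z^{\lceil p_n\rceil},
\]
with $\lambda_n\in(0,1]$ chosen so that $\inf_n\lambda_n>0$ while the tail can still be summed. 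For $r\in[r_m,r_{m+1}]$ the $n=m$ term contributes $\asymp\lambda_m w(r)$ (the rounding of $p_n$ to $\lceil p_n\rceil$ costs only a bounded factor once $r$ is bounded away from $0$), supplying the required lower bound. The upper bound $|f(z)|\le Cw(|z|)$ reduces to estimating the off-diagonal terms; writing $\alpha:=-\log r$ and $q_k:=p_k-p_{k-1}\ge 0$, a telescoping computation produces an explicit formula for $b_n r^{p_n}/w(r)$ as an exponential of a one-signed sum in the $q_k$, automatically $\le 1$, with the slack controlled by the accumulated slope jumps between indices $n$ and~$m$. The hypothesis that $\log w$ is not of moderate growth, equivalent under the piecewise-linear assumption to $\limsup_n p_n2^{-n}=\infty$, is what produces enough accumulated slack to render the tail sum finite.

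\textbf{Main obstacle.} The bulk of the work is case (b). The failure of moderate growth delivers only $\limsup_n p_n2^{-n}=\infty$; individual jumps $q_n$ need not tend to infinity, and so the off-diagonal contributions do not automatically decay geometrically in $|n-m|$. A careful choice of the damping factors $\lambda_n$, matched to the specific rapid-growth profile of $w$, is therefore required, and this choice is the technical heart of the argument --- essentially the construction from~\cite{Hor-95}, adapted to the present formulation. Case (a) is by contrast a clean reduction to the previously established BBT-weight result.
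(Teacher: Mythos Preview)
Your Case~(a) contains a genuine error. A BBT-weight, by definition~\eqref{eq-bbtw}, has $\tau$ positive, increasing, and satisfying $\tau(1-(1-\rho)/c)-\tau(\rho)\ge 1$ for all $\rho\in[0,1)$; iterating this forces $\tau(\rho)\to\infty$ as $\rho\to1^-$, so every BBT-weight grows faster than any power of $1/(1-r)$. On the other hand, moderate growth is equivalent (Lemma~\ref{lem-d}(iii)) to $w(r)\le C(1-r)^{-\alpha}$ for some $\alpha>0$, and your own function $\eta(\rho)=(1-\rho)(\log w)'(\rho)$ is bounded precisely by condition~(i). Hence $\tau:=\sup_{s\le\rho}\eta(s)$ is bounded and non-decreasing, so $\tau(1-(1-\rho)/c)-\tau(\rho)\to 0$ as $\rho\to1^-$ for every $c>1$, and the spread condition can never hold. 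Moderate-growth weights and BBT-weights are disjoint classes (apart from trivialities); no reduction of the former to the latter is possible.

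For comparison, the paper does not split into cases at all at this level: it invokes the product construction of Horowitz \cite[Theorems~2 and~3]{Hor-95}, which in both sub-cases yields a function $g(z)=\prod_j(1+e^{n_j}z^{M_j})/(1+e^{-n_j}z^{M_j})$ with $\log|g(z)|\le\log w(|z|)+O(1)$, zero-counting matching $\log w(r)+O(1)$, and $M(g,r)=g(r)$. The lower bound $g(r)\ge c\,w(r)$ then drops out of Jensen's formula in one line. Your Case~(b) power-series ansatz is a different mechanism from Horowitz's products (so the remark that it is ``essentially the construction from~\cite{Hor-95}'' is not quite right), and while such a lacunary-series approach can be made to work, the damping-factor issue you flag is real and would need to be resolved rather than deferred.
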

\begin{proof}
By the proofs of \cite[Theorems 2 and 3]{Hor-95}, for every H--weight $w$ on $\mathbb D$ there exists a function $g\in H(\mathbb D)$ having the form
$$
g(z)=\prod_{j=1}^\infty\dfrac{1+e^{n_j}z^{M_j}}{1+e^{-n_j}z^{M_j}}, \ \ n_j, M_j\in\mathbb N,
$$
and satisfying the following conditions:
\begin{enumerate}
\item[1)] $\log|g(z)|\le\log w(|z|)+O(1), \ \ z\in\mathbb D$.
\medskip
\item[2)] $\displaystyle\sum_{|z_n|<r}\dfrac{r}{|z_n|}=\log w(r)+O(1), \ r\in(0,1)$, where  $(z_n)_n$ is the zero sequence of $g$.
\item[3)] $M(g,r)=g(r)$ for all $r\in[0,1)$.
\end{enumerate}
By Jensen's formula and conditions 2), 3) we have that
$$
\log g(r)=\log M(g,r)\ge \sum_{|z_n|<r}\dfrac{r}{|z_n|}=\log w(r)+O(1), \ r\in(0,1).
$$
This and condition 1) imply that, for some constants $C>1, c>0$ and $r_0\in[0,1)$, the function $f(z):=g(z)/C$ belongs to $B_w(\mathbb D)$ and $f(r)\ge cw(r)$ for all $r\in[r_0,1)$. Consequently, $w\in CR_{\mathbb D}$.
\end{proof}

Thus, we have shown that every BBT-weight or H--weight on $\mathbb D$ belongs to $CR_{\mathbb D}$.
Using Proposition~\ref{prop-nci}, we then get
\begin{cor}\label{cor-cid}
Let $w$ be a BBT--weight or an H--weight on $\mathbb D$ and $v$ an arbitrary radial weight on $\mathbb D$.
Then the integration operator $I: H_w(\mathbb D)\to H_v(\mathbb D)$ is continuous if and only if condition (ii) of Corollary~\ref{cor-su-i} holds.
\end{cor}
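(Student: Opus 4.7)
The statement to prove is the equivalence characterizing continuity of $I:H_w(\mathbb D)\to H_v(\mathbb D)$ for $w$ a BBT-weight or an H-weight on $\mathbb D$ under the integral condition (ii) of Corollary~\ref{cor-su-i}. My plan is to assemble this as an essentially immediate consequence of the preceding machinery, with no new estimates required.

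First, I would observe that the sufficiency direction is already handled for any radial weights $w$ and $v$ on $\mathbb D$ by Corollary~\ref{cor-su-i}: whenever (ii) holds, the implication (ii)$\Longrightarrow$(iii) (which comes from Proposition~\ref{prop-di}) immediately gives continuity of $I$. So I would only need to address the necessity. For this, the decisive step is to verify that both classes of weights lie in the class $R_{\mathbb D}$ appearing in Proposition~\ref{prop-nci}. Once this is done, Proposition~\ref{prop-nci} supplies exactly the required converse.

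To place BBT-weights and H-weights in $R_{\mathbb D}$, I would recall the inclusion $CR_{\mathbb D}\subset R_{\mathbb D}$ noted after the definition of these classes, and then invoke the two ingredients already established in the paper: the discussion following formula \eqref{eq-bbtw} (which, using \cite[pp.~160--161]{BBT-98} in the same spirit as the CK--weight argument on $\mathbb C$), shows that every BBT-weight belongs to $CR_{\mathbb D}$; and Lemma~\ref{lem-Hor}, which shows the same for every H-weight. These two facts together mean that for any $w$ in either class the hypotheses of Proposition~\ref{prop-nci} are satisfied.

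The proof then reduces to one sentence: apply Proposition~\ref{prop-nci} to $w$ and the arbitrary radial weight $v$, obtaining the equivalence of the continuity of $I:H_w(\mathbb D)\to H_v(\mathbb D)$ with condition (ii). I do not anticipate any genuine obstacle here, since all technical work---the $B_w$-extremal function construction for BBT-weights via the Bierstedt--Bonet--Taskinen approach, the Clunie--K\"ov\'ari/Horowitz product construction for H-weights in Lemma~\ref{lem-Hor}, and the quantitative argument feeding back into the definition of $R_{\mathbb D}$ in Proposition~\ref{prop-nci}---has already been carried out. The only care needed is a clean statement of the two-line proof that simply cites Proposition~\ref{prop-nci} together with the two membership results.
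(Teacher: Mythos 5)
Your proposal is correct and matches the paper's own argument exactly: the paper derives Corollary~\ref{cor-cid} by noting that BBT--weights (via \cite[p.~160--161]{BBT-98}, in the same way as for CK--weights) and H--weights (via Lemma~\ref{lem-Hor}) belong to $CR_{\mathbb D}\subset R_{\mathbb D}$, and then applying Proposition~\ref{prop-nci}, whose sufficiency half is precisely the implication (ii)$\Longrightarrow$(iii) of Corollary~\ref{cor-su-i}. No gaps; your one-sentence application of Proposition~\ref{prop-nci} is exactly how the paper concludes.
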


Now we shall study the boundedness of the integration operator in the cases when $w(r) = v(r)/(1-r)$ on $\mathbb{D}$ or $w = v$ on $\mathbb{C}$.
They correspond to those studied in the previous section for the differentiation operator.
We start with the complex plane case which is simpler than the unit disc one.

\begin{thm}\label{thm-cric}
Let $v$ be a log-convex weight on $\mathbb C$. Then the following assertions are equivalent:
\begin{enumerate}
\item[(i)]
$ \displaystyle \liminf_{r \to \infty} \dfrac{v'(r)}{v(r)} >0.$
\item[(ii)]
$ \displaystyle \limsup_{r \to \infty} \dfrac{1}{v(r)}\int_0^r v(t)\,dt < \infty.$
\item[(iii)]
The integration operator $I: H_v(\mathbb C)\to H_v(\mathbb C)$ is continuous.
\end{enumerate}
\end{thm}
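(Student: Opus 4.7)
The plan is to prove the cycle (i)$\Longrightarrow$(ii)$\Longrightarrow$(iii)$\Longrightarrow$(i). The first two implications are immediate from already-established facts; the substantive step is (iii)$\Longrightarrow$(i), which I would handle by testing $I$ on the normalized monomials.

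For (i)$\Longrightarrow$(ii), apply L'Hospital's rule (in $\limsup$ form) to $\int_0^r v(t)\,dt/v(r)$: the denominator tends to $\infty$ by the growth hypothesis $\log r=o(\log v(r))$, and the derivative quotient satisfies $\limsup v(r)/v'(r)=1/\liminf(v'(r)/v(r))<\infty$ under (i). For (ii)$\Longrightarrow$(iii) I would simply invoke Corollary~\ref{cor-su-i} with $w=v$.

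The substance lies in (iii)$\Longrightarrow$(i). Set $M_n:=\|z^n\|_v=\sup_{r\ge 0}r^n/v(r)$; these are finite (as $v$ grows faster than every polynomial) and each is attained at some $r_n>0$, with $r_n\to\infty$ (otherwise $r_n\le C$ for all $n$ would force $(R/C)^n\le v(R)/v(0)$ for every fixed $R>C$ and all $n$, a contradiction). The normalized monomials $p_n(z):=z^n/M_n$ lie in the unit ball of $H_v(\mathbb{C})$, and $Ip_n(z)=z^{n+1}/((n+1)M_n)$ gives $\|Ip_n\|_v=M_{n+1}/((n+1)M_n)$. Hence boundedness of $I$ with operator norm $K$ yields $M_{n+1}\le K(n+1)M_n$; paired with the trivial lower bound $M_{n+1}\ge r_n^{n+1}/v(r_n)=r_n M_n$ this produces the key estimate $r_n\le K(n+1)$ for every $n$.

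Log-convexity of $v$ now enters decisively: the function $r\mapsto rv'(r)/v(r)$, being the right derivative of the convex function $\log v(e^x)$ in $x=\log r$, is non-decreasing, and the first-order condition at the maximizer of $r^n/v(r)$ reads $r_nv'(r_n)/v(r_n)=n$. For large $R$, let $n$ be the largest integer with $r_n\le R$ (so $r_{n+1}>R$); monotonicity gives $Rv'(R)/v(R)\ge r_nv'(r_n)/v(r_n)=n$, while $R<r_{n+1}\le K(n+2)$ forces $n>R/K-2$. Combining, $v'(R)/v(R)>1/K-2/R$, and letting $R\to\infty$ we obtain $\liminf v'(r)/v(r)\ge 1/K>0$, which is (i). The main technical point to take care of is that a log-convex $v$ need not be everywhere differentiable; throughout, $v'$ should be interpreted as the right derivative (which exists at every point and inherits monotonicity of $rv'(r)/v(r)$ from the convexity of $\log v(e^x)$), and the first-order condition relaxes to $r_nv'(r_n)/v(r_n)\ge n$, which is all the argument actually uses.
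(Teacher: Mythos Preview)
Your proof is correct and follows essentially the same route as the paper: both prove (i)$\Rightarrow$(ii)$\Rightarrow$(iii) via Corollary~\ref{cor-su-i}, and for (iii)$\Rightarrow$(i) both test $I$ on monomials to get $\|z^{n+1}\|_v\le K(n+1)\|z^n\|_v$, translate this into the bound $r_n\le K(n+1)$ for the maximizer of $r^n/v(r)$, and then use the monotonicity of $r\,v'(r)/v(r)$ coming from log-convexity to interpolate and conclude $\liminf v'/v\ge 1/K$. The paper phrases the same computation in the logarithmic variable $x=\log r$ (with $x_n=\log r_n$ and $A_n=\log M_n$), but the substance is identical.
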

\begin{proof}
By Corollary~\ref{cor-su-i} (i) $\Longrightarrow$ (ii) $\Longrightarrow$ (iii).

(iii) $\Longrightarrow$ (i). Let $I: H_v(\mathbb C)\to H_v(\mathbb C)$ be continuous.
Then, for some $C>0$, $\|If\|_v\le C\|f\|_v$ for all $f\in H_v(\mathbb C)$.
Using this for $f(z)=z^n\in H_v(\mathbb C)$, we have that
\begin{equation}\label{eq-znv}
\dfrac{1}{n+1}\|z^{n+1}\|_v\le C\|z^n\|_v \ \ \text{ for all } n\in\mathbb N_0.
\end{equation}
Next, for every $n\in\mathbb N_0$,
\begin{eqnarray*}
\|z^n\|_v&=&
\exp\sup_{r>0}(n\log r-\log v(r))\\
&=&\exp\sup_{x\in\mathbb R}(nx-\varphi_v(x))=:\exp A_n,
\end{eqnarray*}
where $\varphi_v(x):=\log v(e^x)$.

Since $v$ is a log-convex radial weight on $\mathbb C$, the function $\varphi_v$ is convex on $\mathbb R$ and $x=o(\varphi_v(x))$ as $x\to+\infty$.
Without loss of generality we can assume that $\varphi_v$ is differentiable on $\mathbb R$.
Then, for every $n\in\mathbb N_0$, there exists $x_n\in\mathbb R$ with
$$
A_n= nx_n-\varphi_v(x_n) \ \ \text{ and } \ \ \varphi'_v(x_n)=n.
$$
Hence, from \eqref{eq-znv} it then follows that
$$
A_{n+1}-A_n\le\log C(n+1)=\log C(\varphi'_v(x_n)+1).
$$
Obviously,
$$
A_{n+1}-A_n\ge (n+1)x_n-\varphi_v(x_n) - A_n=x_n.
$$
Therefore, $x_n\le \log C(\varphi'_v(x_n)+1)$ or
\begin{equation}\label{eq-phiv}
\varphi'_v(x_n)e^{-x_n}\ge\frac 1C- e^{-x_n}.
\end{equation}
Since $\varphi_v$ is convex, $\varphi'_v$ is increasing.
This implies that $x_n<x_{n+1}$, $n\in \mathbb N_0$, and $x_n\to+\infty$ as $n\to\infty$.
Then, for every $x\ge x_0$, there is $n\in\mathbb N_0$ such that $x_n\le x < x_{n+1}$.
Using \eqref{eq-phiv}, we then have
\begin{align*}
\varphi'_v(x)e^{-x} &
\ge\varphi'_v(x_n)e^{-x_{n+1}}
=\frac{n}{n+1}\varphi'_v(x_{n+1})e^{-x_{n+1}} \\
& \ge\dfrac{n}{n+1} \left( \dfrac{1}{C} - e^{-x_{n+1}} \right),
\end{align*}
which implies that
$$
\liminf_{x\to+\infty}\frac{\varphi'_v(x)}{e^x}\ge\frac 1C>0.
$$
Clearly, the last condition is equivalent to (i).
\end{proof}

The following example shows that Theorem~\ref{thm-cric} might fail for radial weights of a general type.

\begin{exa}\label{ex-int}
Let $(\varepsilon_k)_k$ be a sequence of positive numbers such that $\varepsilon_1 < 1$, $\varepsilon_k \downarrow 0$ as $k \to\infty$ and $\sum_k e^k (e^{\varepsilon_k}-1) <\infty$.
For instance, one can take $\varepsilon_k = e^{-2k}$.

Put $\varphi(x): = e^x$ for $x \in (-\infty,1]$ and denote
$$
\Delta_k: = e^{k+\varepsilon_k} - (e^k + \varepsilon_k), k\in\mathbb N.
$$
Putting
$$
\varphi(x): = x -1 + \varphi(1) \ \ \text{ for } x\in(1, 1+\varepsilon_1]
$$
and
$$
\varphi(x): = e^x - \Delta_1 \ \ \text{ for } x \in (1+\varepsilon_1,2]
$$
and assuming that the function $\varphi$ is already defined on
$(-\infty,n]$, $n\in\mathbb N$, define it on $(n,n+1]$ in the following way:
$$
\varphi(x): = x -n + \varphi(n) \ \ \text{ for } x\in(n,n+\varepsilon_n]
$$
and
$$
\varphi(x): = e^x - \sum_{k=1}^n \Delta_k \ \ \text{ for } x\in(n+\varepsilon_n,n+1].
$$

It is easy to see that $\varphi$ is continuous and increasing but not convex on $\mathbb R$.
From the definition of $\varphi$ it follows that
\begin{equation}\label{eq-ephi}
e^x - C\leq \varphi(x) \leq e^x,\ \ \text{ for all } x \in \mathbb{R},
\end{equation}
where $C: =\sum_k\Delta_k = \sum_k \left(e^k (e^{\varepsilon_k}-1)-\varepsilon_k\right) <\infty$.
By making slight changes of $\varphi$ in the neighborhood of each angular point we may assume that $\varphi$ is differentiable on $\mathbb R$.

Consider two radial weights on $\mathbb C$, $v(r):=\exp\varphi(\log r)$ and $\overline{v}(r): = e^r$.
Condition \eqref{eq-ephi} implies that $e^{-C}\overline{v}(r) \leq v(r) \leq \overline{v}(r)$ for all $r >0$.
Consequently,  $H_v(\mathbb{C}) = H_{\overline{v}}(\mathbb{C})$ as sets and topologically.

Since
$$
\liminf_{r\to \infty}\dfrac{v'(r)}{v(r)} = \liminf_{x\to \infty}\dfrac{\varphi'(x)}{e^x} = \lim_{n\to\infty}\dfrac{\varphi'(n+\varepsilon_n/2)}{e^{n+\varepsilon_n/2}}= \lim_{n\to\infty}\dfrac{1}{e^{n+\varepsilon_n/2}}= 0,
$$
condition (i) of Theorem \ref{thm-cric} does not hold for $v$.
On the other hand, $\displaystyle \lim_{r\to \infty} \overline{v}'(r) / \overline{v}(r) = 1$, which implies the continuity of $I: H_{\overline{v}}(\mathbb{C}) \to H_{\overline{v}}(\mathbb{C})$ or, equivalently, $I: H_v(\mathbb{C}) \to H_v(\mathbb{C})$.
\end{exa}

\begin{cor}\label{cor-epic}
Let $v$ be a log-convex weight on $\mathbb{C}$. The following assertions are equivalent:
\begin{enumerate}
\item[(i)] The operator $D: H_v(\mathbb{C}) \to H_v(\mathbb{C})$ is an epimorphism.
\item[(ii)] $ \displaystyle 0< \liminf_{r \to \infty} \dfrac{v'(r)}{v(r)} \leq \limsup_{r \to \infty} \dfrac{v'(r)}{v(r)} <\infty.$
\item[(iii)] There exist $A, C\ge1$ such that
$$
\dfrac 1A e^{r/C}\le v(r)\le Ae^{Cr} \text{ for all } r\ge 0.
$$
\end{enumerate}
\end{cor}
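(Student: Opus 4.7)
My plan is to split the proof into two halves: (ii)$\Longleftrightarrow$(iii) is a pure calculus statement on log-convex $v$, and (i)$\Longleftrightarrow$(ii) combines Theorems~\ref{thm-d-c} and~\ref{thm-cric} with the open mapping theorem. For the calculus half I would work with $\varphi(x):=\log v(e^x)$, which is convex by log-convexity of $v$ and satisfies $\varphi'(x)/e^x=v'(r)/v(r)$ with $r=e^x$, so all bounds on $v'/v$ become bounds on $\varphi'(x)/e^x$.

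For (ii)$\Longrightarrow$(iii) I simply integrate the two-sided bound $c_1\le v'(r)/v(r)\le c_2$ from some $r_0$ onwards, absorbing the behaviour on $[0,r_0]$ into the constants. For (iii)$\Longrightarrow$(ii) condition (iii) translates to $e^x/C-\log A\le\varphi(x)\le Ce^x+\log A$. The upper bound $\limsup v'/v<\infty$ drops out from the tangent inequality $\varphi'(x)\le\varphi(x+1)-\varphi(x)$ combined with the sandwich on $\varphi$. For the lower bound I would use monotonicity of $\varphi'$ in the form
\[
\delta\varphi'(x)\ge\varphi(x)-\varphi(x-\delta)\ge e^x\bigl(1/C-Ce^{-\delta}\bigr)-2\log A,
\]
the crucial point being that $\delta$ must be chosen depending on $C$, namely $\delta>2\log C$, so that the coefficient $1/C-Ce^{-\delta}$ is positive. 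This choice of window is the one non-obvious step: the naive $\delta=1$ is useless when $C$ is large, and this is where I expect the main obstacle to lie.

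For (ii)$\Longrightarrow$(i), Theorem~\ref{thm-d-c} gives continuity of $D$ and Theorem~\ref{thm-cric} gives continuity of $I:H_v(\mathbb C)\to H_v(\mathbb C)$; since $D\circ I=\mathrm{id}$, the operator $D$ is a continuous surjection, hence an epimorphism. Conversely, for (i)$\Longrightarrow$(ii), continuity of $D$ alone yields $\limsup v'/v<\infty$ via Theorem~\ref{thm-d-c}. For $\liminf v'/v>0$ I would apply the open mapping theorem: surjectivity of $D$ supplies $M>0$ such that every $f\in H_v(\mathbb C)$ has a primitive $g\in H_v(\mathbb C)$ with $\|g\|_v\le M\|f\|_v$. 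Taking $f(z)=z^n$ and $g(z)=z^{n+1}/(n+1)+c_n$, the estimate $|c_n|=|g(0)|\le v(0)\|g\|_v$ together with the triangle inequality yields $\|z^{n+1}\|_v\le 2M(n+1)\|z^n\|_v$, which is exactly \eqref{eq-znv} with constant $2M$. The Legendre-transform argument following \eqref{eq-znv} in the proof of Theorem~\ref{thm-cric} then applies verbatim to conclude $\liminf v'/v>0$.
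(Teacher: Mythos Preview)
Your proposal is correct and follows the paper's proof closely. The only differences are organizational: for (iii)$\Longrightarrow$(ii) the paper proves only the $\liminf$ part directly (using the same secant-slope trick, written multiplicatively with $\alpha\in(0,C^{-2})$ in place of your additive window $\delta>2\log C$) and defers the $\limsup$ to Theorem~\ref{thm-d-c}; and for (i)$\Longleftrightarrow$(ii) the paper simply cites Theorems~\ref{thm-d-c} and~\ref{thm-cric}, leaving implicit the open-mapping step you spell out (namely that $D$ surjective forces $I$ to be continuous, or equivalently forces \eqref{eq-znv}).
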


\begin{proof}
(i)$\Longleftrightarrow$(ii): Follows directly from Theorems~\ref{thm-d-c} and \ref{thm-cric}.

(ii)$\Longrightarrow$(iii): Obvious.

(iii)$\Longrightarrow$(ii): In view of Theorem~\ref{thm-d-c} it is enough to check that $ \displaystyle \liminf_{r \to \infty} \dfrac{v'(r)}{v(r)}>0$.
Taking $\alpha \in (0, C^{-2}) $ and using that $v$ is log-convex and (iii), we have
$$
\dfrac{v'(r)}{v(r)}
\geq  \dfrac{\log v(r) - \log v(\alpha r)}{r\log (1 / \alpha)}\ge \dfrac{(1/C-\alpha C)r-2\log A}{r\log (1 / \alpha)}.
$$
Then
$$
\liminf_{r \to \infty} \dfrac{v'(r)}{v(r)} \geq \dfrac{1}{\log (1 / \alpha)} \left( \dfrac{1}{C} - \alpha C \right) >0.
$$
\end{proof}
\begin{rem}\label{rem-int}
It is easy to see that Example~\ref{ex-2} with $a_n = 3^{-n}$ and $b_n = \log(1+\frac{1}{n}) - 3^{-n}$,  $n \in \mathbb{N}$, shows that our assumption that $v$ is log-convex is essential in Corollary~\ref{cor-epic} (cf. Harutyunyan--Lusky \cite[Theorem~4.2]{HL-08}).
\end{rem}

Consider now the case of the unit disc.

\begin{prop}\label{prop-sci}
Let $v$ be a radial weight on $\mathbb D$ and $w(r):=v(r)/(1-r)$.
Consider the following conditions:
\begin{enumerate}
\item[(i)] $v$ satisfies
\begin{equation}\label{eq-infv}
\liminf_{r\to1^-}\dfrac{(1-r)v'(r)}{v(r)}>0.
\end{equation}
\item[(ii)] $v$ satisfies
\begin{equation}\label{eq-intv}
\limsup_{r\to1^-}\dfrac{1}{v(r)}\int_0^r\dfrac{v(t)}{1-t}\,dt<\infty.
\end{equation}
\item[(iii)] The integration operator $I: H_w(\mathbb D)\to H_v(\mathbb D)$ is continuous.
\item[(iv)]  $\log \dfrac{1}{1-r} = O (\log v(r))$ as $r \to 1^-$.
\end{enumerate}
Then (i)$\Longrightarrow$(ii)$\Longrightarrow$(iii)$\Longrightarrow$(iv).
\end{prop}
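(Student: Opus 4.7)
The plan is to establish the chain $(i)\Rightarrow (ii)\Rightarrow (iii)\Rightarrow (iv)$. The first two implications are routine. For $(i)\Rightarrow (ii)$, since both $v(r)\to\infty$ and $\int_0^r v(t)/(1-t)\,dt\to\infty$ as $r\to 1^-$, I would apply L'H\^opital's rule to obtain
\[
\limsup_{r\to 1^-}\frac{1}{v(r)}\int_0^r\frac{v(t)}{1-t}\,dt
=\limsup_{r\to 1^-}\frac{v(r)}{(1-r)v'(r)}
=\frac{1}{\liminf_{r\to 1^-}\frac{(1-r)v'(r)}{v(r)}}<\infty.
\]
The implication $(ii)\Rightarrow (iii)$ is an immediate consequence of Corollary~\ref{cor-su-i} applied with $w(r)=v(r)/(1-r)$, since condition (ii) of that corollary is exactly our (ii).

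The essential step is $(iii)\Rightarrow (iv)$, which I would prove by testing the continuity of $I$ against the family
\[
g_k(z):=\frac{L(z)^k}{1-z}\in H(\mathbb{D}),\qquad L(z):=-\log(1-z),\quad k\in\mathbb{N}_0,
\]
where $\log$ is the principal branch, for which a direct computation yields $Ig_k(z)=L(z)^{k+1}/(k+1)$. Writing $L(r)=\log(1/(1-r))$ and $A_k:=\sup_{r\in[0,1)}L(r)^k/v(r)$, evaluation at $z=r\in[0,1)$ gives the lower bound $\|Ig_k\|_v\ge A_{k+1}/(k+1)$. The main obstacle is the matching upper bound on $\|g_k\|_w$: I would use the pointwise estimate
\[
|L(z)|^2=(\log|1-z|)^2+(\arg(1-z))^2\le\bigl(L(|z|)+\tfrac{\pi}{2}\bigr)^2,
\]
valid for $|z|\ge 1/2$ (where $|\log|1-z||\le L(|z|)$; the region $|z|<1/2$ is handled by a compact estimate), together with $|1-z|\ge 1-|z|$, and split the supremum over $r$ into the regions $\{L(r)\ge k\}$ (where $(L(r)+\pi/2)^k\le e^{\pi/2}L(r)^k$) and $\{L(r)<k\}$ (controlled by $(2k)^k/v(0)$). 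This yields $\|g_k\|_w\le e^{\pi/2}A_k+(2k)^k/v(0)$ for all $k$ sufficiently large.

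Combining the two estimates with the continuity of $I$ produces the recursion $A_{k+1}\le C(k+1)\bigl(A_k+(2k)^k/v(0)\bigr)$. Since $(2k)^k\sim(2e)^kk!/\sqrt{2\pi k}$ grows at the factorial scale, a routine induction then gives $A_k\le KB^kk!$ for some constants $K,B>0$. Thus $v(r)\ge L(r)^k/(KB^kk!)$ for every $r\in[0,1)$ and every $k\ge 0$, and optimizing over $k$ near the value $k\approx L(r)/B$ (by Stirling) yields $v(r)\gtrsim e^{L(r)/B}/\sqrt{L(r)}$ for $r$ close to $1$. Hence
\[
\log v(r)\ge\frac{L(r)}{B}-\tfrac12\log L(r)-O(1)\ge\frac{1}{2B}\log\frac{1}{1-r}
\]
for $r$ sufficiently close to $1$, which is exactly $(iv)$. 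The critical point in the argument is that the error $L(|z|)\mapsto L(|z|)+\pi/2$ is only additive: a multiplicative bound $|L(z)|\le c\,L(|z|)$ with $c>1$ would propagate as $c^k$ through the iteration and destroy the $\exp(L(r)/B)$ lower bound on $v$ that ultimately produces $(iv)$.
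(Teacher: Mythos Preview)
Your argument is correct. The implications $(i)\Rightarrow(ii)\Rightarrow(iii)$ coincide with the paper's (both are referred to Corollary~\ref{cor-su-i}, whose proof of $(i)\Rightarrow(ii)$ is precisely the L'H\^opital step you wrote).

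For $(iii)\Rightarrow(iv)$ you use the same test functions as the paper: up to the normalization $g_k=k!\,f_k$, your $g_k(z)=L(z)^k/(1-z)$ is exactly the paper's sequence $f_0(z)=1/(1-z)$, $f_n(z)=(1-z)^{-1}If_{n-1}(z)$. The difference is in how $\|g_k\|_w$ is bounded. You control $|L(z)|^k$ pointwise by $(L(|z|)+\pi/2)^k$ and then split the supremum over $\{L(r)\ge k\}$ and $\{L(r)<k\}$; this introduces the error term $(2k)^k/v(0)$ and leads to the recursion $A_{k+1}\le C(k+1)\bigl(e^{\pi/2}A_k+(2k)^k/v(0)\bigr)$, which you correctly resolve into $A_k\le KB^kk!$. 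The paper avoids the pointwise estimate on $|L(z)|$ altogether: from $|1-z|\ge 1-|z|$ one gets directly
\[
\|f_n\|_w=\sup_{z\in\mathbb D}\frac{|If_{n-1}(z)|}{|1-z|}\cdot\frac{1-|z|}{v(|z|)}\le\|If_{n-1}\|_v\le C\|f_{n-1}\|_w,
\]
so $\|If_{n-1}\|_v\le C^n\|f_0\|_w$ and hence $L(r)^n/(n!\,v(r))\le C^n\|f_0\|_w$ without any additive error. Both routes yield the same factorial bound $A_n\lesssim n!\,B^n$ and finish identically via the maximum term of $e^R$ (or, equivalently, Stirling). Your approach works, but the paper's recursion is shorter because it never has to compare $|L(z)|$ with $L(|z|)$ at all; the multiplication by $(1-z)^{-1}$ absorbs that comparison for free.
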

\begin{proof}
(i)$\Longrightarrow$(ii)$\Longrightarrow$(iii): Follows immediately from Corollary~\ref{cor-su-i}.

(iii)$\Longrightarrow$(iv):
Since $I: H_w(\mathbb{D}) \to H_v(\mathbb{D})$ is continuous, there exists a constant $C>0$ such that
\begin{equation}\label{eq-coni}
\|If\|_v \leq C \|f\|_w\ \ \text{ for all } f \in H_w(\mathbb{D}).
\end{equation}

Define the sequence $(f_n)_n$ in the following way:
$$
f_0(z): = \dfrac{1}{1-z},\ \ \text{ and } \ \ f_n(z): = \dfrac{1}{1-z} If_{n-1}(z),\ \ n\in\mathbb N, \ z\in\mathbb D.
$$
Obviously, $f_0 \in H_w(\mathbb{D})$.
Then $If_0 \in H_v(\mathbb{D})$ and $f_1 \in H_w(\mathbb{D})$.
By induction we have that $f_n \in H_w(\mathbb{D})$ for all $n \in \mathbb{N}$.
Moreover, using \eqref{eq-coni}, we get
\begin{align*}
\|f_n\|_w &= \sup_{z \in \mathbb{D}}\dfrac{|(1-z)^{-1}If_{n-1}(z)|}{(1-|z|)^{-1}v(|z|)} \leq \sup_{z \in \mathbb{D}}\dfrac{|If_{n-1}(z)|}{v(|z|)}\\
& = \|If_{n-1}\|_v \leq C \|f_{n-1}\|_w, \ \forall n\in\mathbb N.
\end{align*}
Therefore, $\|f_{n-1}\|_w \leq C^{n-1}\|f_0\|_w$, and, consequently, for all $n \in \mathbb{N}$,
$$
\|If_{n-1}\|_v \leq C^n \|f_0\|_w.
$$
Clearly, $If_{n-1}(z) = \dfrac{1}{n!}\left( \log \dfrac{1}{1-z} \right)^n,\ \forall n \in \mathbb{N}$.
Thus,
$$
\dfrac{1}{n!}\left( \frac{1}{C}\log \frac{1}{1-r} \right)^n \leq \|f_0\|_w v(r) \text{  for all n } \in \mathbb{N} \text{ and } r \in [0,1).
$$
It is easy to see that this inequality holds also for $n=0$.
Hence,
$$
A(r):=\sup_{n\in\mathbb N_0}\dfrac{1}{n!}\left( \frac{1}{C}\log \frac{1}{1-r} \right)^n \leq \|f_0\|_w v(r) \ \ \text{ for all } r\in[0,1).
$$
As is well known, $\displaystyle\lim_{R\to+\infty} \log\mu(R) / R =1$, where
$\mu(R):=\displaystyle\sup_{n\in\mathbb N_0} R^n/ n!$ is the maximum term of the Taylor series of $e^z$.
Then
$$
\lim_{r\to1^-}\dfrac{\log A(r)}{\frac{1}{C}\log\frac{1}{1-r}}=1
$$
and, consequently,
$$
\liminf_{r \to1^-} \dfrac{\log v(r)}{\log \frac{1}{1-r}} \geq
\liminf_{r\to1^-} \dfrac{\log A(r)-\log\|f_0\|_w}{\log \frac{1}{1-r}}=\dfrac 1C>0.
$$
\end{proof}

\begin{rem}\label{rem-iv}
Clearly, the weight $v(r)=1/(1-r)^{\alpha}$, $\alpha>0$, satisfies \eqref{eq-infv}.
Hence, the integration operator maps continuously $H_w(\mathbb D)$, $w(r)=1/(1-r)^{\alpha+1}$, into $H_v(\mathbb D)$.
This shows that condition (iv) in Proposition~\ref{prop-sci} is exact, i.e. in this condition one cannot replace the function $\log (1/(1-r))$ by a positive function $\varphi(r)$ with
$$
\limsup_{r\to1^-}\dfrac{\varphi(r)}{\log(1/(1-r))}=\infty.
$$
\end{rem}

We say that a radial weight $v$ on $\mathbb D$ is \textit{regular} if there exists
$$
\lim_{r \to 1^-} \dfrac{(1-r)v'(r)}{v(r)} =: L_v.
$$
Obviously, $L_v\in [0,+\infty]$.

\begin{cor}\label{cor-reg}
Let $v$ be a regular radial weight on $\mathbb{D}$ and $w(r) = v(r)/(1-r)$. The integration operator $I: H_w(\mathbb{D})\to H_v(\mathbb{D})$ is continuous if and only if $L_v>0$.
\end{cor}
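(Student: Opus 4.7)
The corollary specializes Proposition~\ref{prop-sci} to regular radial weights, where the existence of the limit $L_v$ forces the chain (i)~$\Rightarrow$~(ii)~$\Rightarrow$~(iii)~$\Rightarrow$~(iv) of that proposition to collapse into an equivalence at the level of $L_v$. My plan is therefore to invoke Proposition~\ref{prop-sci} as a black box and handle the two directions separately.

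For sufficiency, assume $L_v > 0$. By regularity, $\liminf_{r \to 1^-}(1-r)v'(r)/v(r) = L_v > 0$, which is exactly condition (i) of Proposition~\ref{prop-sci}. That proposition then gives (iii), i.e.\ $I: H_w(\mathbb{D}) \to H_v(\mathbb{D})$ is continuous.

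For necessity, assume $I$ is continuous. Proposition~\ref{prop-sci} yields (iv), so there exist $M>0$ and $r_0 \in [0,1)$ with
$$\frac{\log v(r)}{\log(1/(1-r))} \geq \frac{1}{M}>0 \quad \text{for all } r \in (r_0,1).$$
Both $\log v(r)$ and $\log(1/(1-r))$ tend to $+\infty$ as $r \to 1^-$, and the ratio of their derivatives is precisely $(1-r)v'(r)/v(r)$, which by regularity converges to $L_v$ in $[0,+\infty]$. The extended form of L'Hôpital's rule (allowing infinite limits) then forces $\log v(r)/\log(1/(1-r)) \to L_v$, so passing to the limit in the displayed inequality yields $L_v \geq 1/M > 0$. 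The only mild subtlety is the extended L'Hôpital step needed to cover the case $L_v=+\infty$ uniformly with the finite case; otherwise the proof is essentially mechanical once Proposition~\ref{prop-sci} is in place.
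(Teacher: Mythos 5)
Your proof is correct and takes essentially the same approach as the paper: both directions rest on Proposition~\ref{prop-sci}, with $L_v>0$ giving condition (i) and the L'H\^{o}pital comparison of $\log v(r)$ with $\log(1/(1-r))$ handling the converse. The paper merely phrases the necessity contrapositively (if $L_v=0$, L'H\^{o}pital shows the log-ratio tends to $0$, so condition (iv) fails), which sidesteps the extended $L_v=+\infty$ form of L'H\^{o}pital you invoke --- a case that is in any event vacuous, since $L_v=+\infty$ already gives $L_v>0$.
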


\begin{proof}
If $L_v>0$, then the integration operator is continuous by Proposition~\ref{prop-sci}.

If $L_v=0$, then, by the L'Hospital rule,
$$
\lim_{r \to 1^-} \dfrac{\log v(r)}{\log \frac{1}{1-r}} = 0.
$$
Therefore, $v$ does not satisfy condition (iv) of Proposition~\ref{prop-sci}, which implies that the integration operator is not continuous.
\end{proof}

The following statement describes some properties of radial weights satisfying \eqref{eq-infv} and \eqref{eq-intv}.

\begin{lem}\label{lem-vid}
For a radial weight $v$ on $\mathbb{D}$ consider the following conditions:
\begin{enumerate}
\item[(i)] $v$ satisfies \eqref{eq-infv}.
\item[(ii)] $(1-r)^\alpha v(r)$ is increasing on $[r_0,1)$ for some $\alpha>0$ and $r_0\in[0,1)$.
\item[(iii)] $(1-r)^\alpha v(r)$ is almost increasing on $[0,1)$ for some $\alpha>0$, i.e. there exists a constant $C>0$ such that for any $r_1<r_2$ it follows that $ (1-r_1)^{\alpha} v(r_1) \leq C (1 - r_2)^{\alpha} v(r_2) $.
\item[(iv)] $1/v$ satisfies condition $(**)$ from \cite[p.~310]{L-95} and \cite[Definition~2.1]{L-96}, i.e. there is $k \in \mathbb{N}$ such that
$$
 \limsup_{n\to\infty} \dfrac{v(1-2^{-n})}{v(1-2^{-n-k})} < 1.
$$
\item[(v)]
There exists $\delta_0\in(0,1)$ such that
$$
\limsup_{r\to1^-}\dfrac{v(r)}{v\left(\dfrac{r+\delta_0}{1+\delta_0r}\right)}<1.
$$
\item[(vi)] There exists $\gamma >1$ such that
$$
\limsup_{r\to1^-}\dfrac{v(r^{\gamma})}{v(r)}<1.
$$
\item[(vii)] $v$ satisfies \eqref{eq-intv}.
\end{enumerate}
Then (i)$\Longleftrightarrow$(ii)$\Longrightarrow$(iii)$\Longleftrightarrow$
(iv)$\Longleftrightarrow$(v)$\Longleftrightarrow$(vi)$\Longleftrightarrow$(vii).
For a log-convex weight $v$ conditions (i)--(vii) are all equivalent.
\end{lem}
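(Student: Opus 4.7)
The plan is to mirror the structure of the proof of Lemma~\ref{lem-d} as closely as possible, since the roles of ``decreasing'' there and ``increasing'' here are dual. I would handle (i)$\Longleftrightarrow$(ii) by a logarithmic-derivative calculation: (i) asserts $(1-r)v'(r)/v(r)\ge\alpha$ eventually for some $\alpha>0$, which is equivalent to $\bigl(\log((1-r)^{\alpha}v(r))\bigr)'\ge 0$, hence to the monotonicity in (ii); the argument parallels \cite[Theorem~3.2]{HL-08}. The implication (ii)$\Longrightarrow$(iii) is routine, adjusting on the compact set $[0,r_0]$. For (iii)$\Longleftrightarrow$(iv)$\Longleftrightarrow$(v) I would cite \cite[Lemma~1]{DoLi-02}, exactly as was done in Lemma~\ref{lem-d}: the Lusky condition $(\ast\ast)$ is an equivalent dyadic version of the almost-monotonicity, and the M\"obius-shift version (v) is equivalent to it by the same proof.

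For (v)$\Longleftrightarrow$(vi) I would repeat the small computation used for the analogous step of Lemma~\ref{lem-d}, but with $\gamma>1$ instead of $\gamma\in(0,1)$: given $\delta_0\in(0,1)$, note that
\[
1-\frac{r+\delta_0}{1+\delta_0 r}=\frac{(1-\delta_0)(1-r)}{1+\delta_0 r},
\]
so $(r+\delta_0)/(1+\delta_0 r)$ approaches $1$ at a rate that is a fixed multiple of $(1-r)$. Choosing $\gamma$ slightly larger than $(1+\delta_0)/(1-\delta_0)>1$ ensures $r^{\gamma}\le(r-\delta_0)/(1-\delta_0 r)$ near $1$; since $v$ is increasing, this together with the automorphism substitution $s=(r+\delta_0)/(1+\delta_0 r)$ converts (v) into (vi) and vice versa.

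For (vi)$\Longleftrightarrow$(vii), which is the novel step here, I would use a dyadic decomposition. On $I_n=[1-2^{-n},1-2^{-n-1}]$ one has
\[
\int_{I_n}\frac{v(t)}{1-t}\,dt\le v(1-2^{-n-1})\log 2,
\]
so $\int_0^r v(t)/(1-t)\,dt$ is bounded by a constant multiple of $\sum_{n\le m} v(1-2^{-n-1})$, where $r\approx 1-2^{-m-1}$. Condition (iv) (which follows from (vi)) gives $v(1-2^{-n})\le q\,v(1-2^{-n-k})$ with $q<1$, so the partial sums are dominated geometrically by their last term $v(1-2^{-m-1})\le v(r)$; this yields (vii). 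For the converse (vii)$\Longrightarrow$(v), I use the trivial monotonicity bound
\[
M v(r_2)\ge \int_{r_1}^{r_2}\frac{v(t)}{1-t}\,dt\ge v(r_1)\log\frac{1-r_1}{1-r_2},
\]
applied with $r_1=r$ and $r_2=(r+\delta_0)/(1+\delta_0 r)$, where the log factor tends to $\log\bigl((1+\delta_0)/(1-\delta_0)\bigr)$. Picking $\delta_0$ close enough to $1$ so that this exceeds $2M$ forces $v(r)/v(r_2)\le 1/2$, giving (v).

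Finally, for the log-convex case I must add the implication (iv)$\Longrightarrow$(i), which will be the main technical step. Log-convexity means $r\mapsto r\,(\log v)'(r)$ is increasing, hence the secant estimate
\[
r_2\frac{v'(r_2)}{v(r_2)}\ge \frac{\log v(r_2)-\log v(r_1)}{\log(r_2/r_1)}
\]
holds. I would plug in $r_1=1-2^{-n}$, $r_2=1-2^{-n-k}$, using (iv) to get $\log v(r_2)-\log v(r_1)\ge\log(1/q)$ and the elementary estimate $\log(r_2/r_1)\le C\cdot 2^{-n}$, so that $(1-r_2)v'(r_2)/v(r_2)$ is bounded below by a positive constant independent of $n$. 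The main obstacle is propagating this from the dyadic points $r_2=1-2^{-n-k}$ to all $r$ close to $1$; this is done exactly as in the proof of (iv)$\Longrightarrow$(i) in Lemma~\ref{lem-d}, by choosing $n$ with $1-2^{-n-k-1}\le r\le 1-2^{-n-k}$ and again invoking log-convexity on the interval $[r,1-2^{-n-k}]$ (or using monotonicity of $r\,(\log v)'$ directly), which yields the uniform lower bound demanded by (i).
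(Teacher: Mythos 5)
Your proposal is correct, and for most of the chain it follows the same route as the paper: (i)$\Longleftrightarrow$(ii) via the logarithmic derivative as in \cite[Theorem~3.3]{HL-08}, (ii)$\Longrightarrow$(iii) trivially, (iii)$\Longleftrightarrow$(iv) from \cite[Lemma~1(b)]{DoLi-02}, (iv)$\Longleftrightarrow$(v) by the Doma\'nski--Lindstr\"om argument (the paper writes this adaptation out in full rather than citing it, choosing $\delta_0\in(1-2^{-k-1},1)$ one way and $2^{-k}<(1-\delta_0)/(1+\delta_0)$ the other), (v)$\Longleftrightarrow$(vi) by the same M\"obius-versus-power comparison used in Lemma~\ref{lem-d}, and (iv)$\Longrightarrow$(i) for log-convex $v$ by the same secant estimate together with monotonicity of $r(\log v)'(r)$. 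The genuine divergence is at (vi)$\Longleftrightarrow$(vii). The paper proves (vi)$\Longrightarrow$(vii) without passing through (iv): taking $\gamma\in\mathbb N$, $\gamma\ge2$, it iterates (vi) along the multiplicative sequence $r, r^{\gamma}, r^{\gamma^2},\dots,r^{\gamma^n}$ (with $r^{\gamma^n}$ landing in $[r_0,r_0^{1/\gamma})$), splits the integral over the intervals $[r^{\gamma^{j+1}},r^{\gamma^j}]$, and exploits the identity $\log\bigl((1-r^{\gamma^{j+1}})/(1-r^{\gamma^j})\bigr)=\log\bigl(1+r^{\gamma^j}+\dots+r^{\gamma^j(\gamma-1)}\bigr)\le\log\gamma$ to get the clean bound $v(r)\log\gamma/(1-q)$ plus a constant. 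Your dyadic decomposition at the points $1-2^{-n}$, combined with the already-established (vi)$\Longrightarrow$(iv) and a residue-class-mod-$k$ geometric summation, reaches the same conclusion; it is slightly less self-contained (it leans on the equivalence with (iv)) but is the more standard-looking argument, and it makes visible that (vii) is really a dyadic-sum condition. For the converse, your (vii)$\Longrightarrow$(v) and the paper's (vii)$\Longrightarrow$(vi) are the same one-line estimate $A\,v(r_2)\ge\int_{r_1}^{r_2}v(t)(1-t)^{-1}\,dt\ge v(r_1)\log\frac{1-r_1}{1-r_2}$, merely aimed at different (equivalent) targets; the paper takes $r_1=r^{\gamma}$, $r_2=r$ and then $\gamma>e^{A}$, you take the M\"obius pair and $\delta_0$ near $1$ --- both fine.

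Two small slips to repair. First, in your dyadic (vi)$\Longrightarrow$(vii): to have $v(1-2^{-m-1})\le v(r)$ you must take $m$ maximal with $1-2^{-m-1}\le r$, and then the intervals $I_n$, $n\le m$, do not cover $[0,r]$; you need the extra leftover piece $\int_{1-2^{-m-1}}^{r}v(t)(1-t)^{-1}\,dt\le v(r)\log\frac{2^{-m-1}}{1-r}\le v(r)\log2$, valid since $1-r>2^{-m-2}$. Second, in the propagation step of (iv)$\Longrightarrow$(i) your interval is written backwards: since $2^{-n-k-1}<2^{-n-k}$, the condition should read $1-2^{-n-k}\le r\le 1-2^{-n-k-1}$; with that corrected, monotonicity of $r(\log v)'(r)$ transfers the lower bound from the dyadic point $1-2^{-n-k}$ to all $r$ to its right, exactly as you intend (the paper instead estimates the derivative at the \emph{left} endpoint $1-2^{-n}$ of the dyadic interval containing $r$ by the secant over $[1-2^{-n+k},1-2^{-n}]$, which requires $n\ge n_0+k$ --- same substance, different bookkeeping). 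Neither slip affects the soundness of the overall plan.
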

\begin{proof}
(i)$\Longleftrightarrow$(ii): Similarly to \cite[Theorem~3.3]{HL-08}.

(ii)$\Longrightarrow$(iii): Obvious.

(iii)$\Longleftrightarrow$(iv): Follows from \cite[Lemma~1 (b)]{DoLi-02}.

(iv)$\Longleftrightarrow$(v): We will use the ideas of Doma\'nski--Lindstr\"om (see the proof of \cite[Lemma~1 (a), (iii)$\Longleftrightarrow$(v)]{DoLi-02}).
Condition (iv) implies that, for some $n_0 \in \mathbb{N}$ and $q \in (0,1)$,
$$
v(1-2^{-n}) \leq q v(1-2^{-n-k}) \ \ \text{ for all } n \geq n_0.
$$
Take $\delta_0 \in (1-2^{-k-1},1)$ and, given  $r\ge 1-2^{-n_0}$, choose $n\ge n_0$ so that $1-2^{-n}\le r< 1-2^{-n-1}$.
It is easy to see that
$$
 1 - 2^{-n-1-k} < \dfrac{\delta_0 + r}{1 + \delta_0 r}.
$$
Consequently,
$$
v(r) \le v(1-2^{-n-1}) \leq q v(1-2^{-n-1-k}) \leq q v\left( \dfrac{\delta_0 + r}{1 + \delta_0 r} \right),
$$
which gives (v).

Conversely, take  $k \in \mathbb{N}$ so that $2^{-k} < (1-\delta_0)/(1+\delta_0)$.
By direct calculations we have that
$$
1 - 2^{-n-k} > \dfrac{(1 - 2^{-n}) + \delta_0 }{1+\delta_0(1 - 2^{-n})}\ \text{ for every } n\in\mathbb N.
$$
Then
$$
\dfrac{v(1-2^{-n})}{v(1-2^{-n-k})} \leq \dfrac{v(1-2^{-n})}{v \left( \dfrac{(1 - 2^{-n}) + \delta_0 }{1+\delta_0(1 - 2^{-n})} \right)}.
$$
and, applying (v), we obtain (iv).

(v)$\Longleftrightarrow$ (vi): Similarly to (v)$\Longleftrightarrow$ (vi) in the proof of Lemma~\ref{lem-d}.

(vi)$\Longrightarrow$(vii): Condition (vi) implies that there exist $q,r_0\in(0,1)$ such that
\begin{equation}\label{eq-vn}
v(r^\gamma)\le qv(r) \ \ \text{ for all } r\in[r_0,1).
\end{equation}
Without loss of generality we can assume that $\gamma\in\mathbb N$, $\gamma\ge 2$.
Given $r\in[r_0,1)$, denote by $n$ the integral part of $\log_{\gamma}\dfrac{\log r_0}{\log r}$.
Then $r_0\le r^{\gamma^n}<r_0^{1/\gamma}$.
Using \eqref{eq-vn} several times, we have that
$$
v(r^{\gamma^k})\le q^kv(r)\ \ \text{ for all } r\ge r_0 \ \text{ and } k=1,\ldots,n.
$$
Consequently,
\begin{eqnarray*}
& &\int_{r_0}^{r}\dfrac{v(t)}{1-t}\,dt \le
\int_{r_0}^{r_0^{1/\gamma}}\dfrac{v(t)}{1-t}\,dt+\sum_{k=1}^n\int_{r^{\gamma^{n-k+1}}}^{r^{\gamma^{n-k}}}\dfrac{v(t)}{1-t}\,dt
\\ &\le& \int_{r_0}^{r_0^{1/\gamma}}\dfrac{v(t)}{1-t}\,dt+\sum_{k=1}^n v(r^{\gamma^{n-k}})\log\dfrac{1-r^{\gamma^{n-k+1}}}{1-r^{\gamma^{n-k}}}\\
&\le& \int_{r_0}^{r_0^{1/\gamma}}\dfrac{v(t)}{1-t}\,dt+v(r)\sum_{k=1}^n q^{n-k}\log\left(1+r^{\gamma^{n-k}}+\ldots+r^{\gamma^{n-k}(\gamma-1)}\right)\\
&\le& \int_{r_0}^{r_0^{1/\gamma}}\dfrac{v(t)}{1-t}\,dt+v(r)\dfrac{\log \gamma}{1-q}.
\end{eqnarray*}
From this it follows that
$$
\limsup_{r\to 1^-}\dfrac{1}{v(r)}\int_0^r\dfrac{v(t)}{1-t}\,dt\le \dfrac{\log \gamma}{1-q}<\infty,
$$
that is, (vii) holds.

(vii)$\Longrightarrow$ (vi):
From (vii) it follows that there exists $A<\infty$ such that
$$
\dfrac{1}{v(r)}\int_0^r\dfrac{v(t)}{1-t}\,dt\le A \ \ \text{ for all } r\in[0,1).
$$
Then, for any $\gamma>1$, $\gamma\in\mathbb N$, and all $r\in[0,1)$,
$$
A\ge\dfrac{1}{v(r)}\int_{r^\gamma}^r\dfrac{v(t)}{1-t}\,dt
\ge\dfrac{v(r^{\gamma})}{v(r)}\log(1+r+\ldots+r^{\gamma-1}).
$$
Hence,
$$
\limsup_{r\to1^-}\dfrac{v(r^\gamma)}{v(r)}\le\dfrac{A}{\log \gamma}.
$$
Taking $\gamma>e^A$, we obtain (vi).

Thus, (iii)$\Longleftrightarrow$
(iv)$\Longleftrightarrow$(v)$\Longleftrightarrow$(vi)$\Longleftrightarrow$(vii) and, to complete the proof, it remains to check that (iv)$\Longrightarrow$ (i) for a log-convex weight $v$.

Condition (iv) implies that, for some $B>1$ and $n_0\in\mathbb N$,
$$
\dfrac{v(1-2^{-n-k})}{v(1-2^{-n})}\ge B>1 \ \ \text{ for all } n\ge n_0.
$$
Using that $v$ is log-convex, for every  $n\ge n_0+k$ and all $r\in[1-2^{-n}, 1-2^{-n-1})$, we have
\begin{align*}
r\,(\log v(r))'&\ge
(1-2^{-n})(\log v)'(1-2^{-n}) \\
& \ge
\dfrac{\log(v(1-2^{-n})/v(1-2^{-n+k}))}{\log((1-2^{-n})/(1-2^{-n+k}))} \\
&\ge \dfrac{\log B}{\log\left(1+\frac{2^{-n+k}-2^{-n}}{1-2^{-n+k}}\right)} \\
& \ge
\dfrac{(1-2^{-n+k})\log B}{2^{-n}(2^k-1)}> \dfrac{\log B}{2^{k+3} (1-r)}.
\end{align*}
Consequently,
$$
\liminf_{ r \to 1^-} \dfrac{(1-r)v'(r)}{v(r)}\ge \dfrac{\log B}{2^{k+3}}>0.
$$
\end{proof}

\begin{cor}\label{cor-vid}
Let $v$ be a log-convex radial weight on $\mathbb D$ and $w(r):=v(r)/(1-r)$.
If one of the equivalent conditions (i)--(vii) of Lemma~\ref{lem-vid} holds, then the integration operator $I: H_w(\mathbb D)\to H_v(\mathbb D)$ is continuous.
\end{cor}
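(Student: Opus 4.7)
The plan is extremely short: exploit the fact that in Lemma~\ref{lem-vid} conditions (i)--(vii) are all equivalent for a log-convex weight, and then reduce the problem to the already-proved implication (ii)$\Longrightarrow$(iii) in Corollary~\ref{cor-su-i}.

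More precisely, I would argue as follows. Since $v$ is log-convex, by Lemma~\ref{lem-vid} any of the hypotheses (i)--(vii) implies in particular condition (vii), that is,
\[
\limsup_{r\to1^-}\dfrac{1}{v(r)}\int_0^r\dfrac{v(t)}{1-t}\,dt<\infty.
\]
But this is exactly condition (ii) of Corollary~\ref{cor-su-i} when one takes the two radial weights there to be $w(r)=v(r)/(1-r)$ and $v(r)=v(r)$, with $a=1$. Corollary~\ref{cor-su-i} then gives (iii) of that corollary, i.e.\ the continuity of the integration operator
\[
I:H_w(\mathbb D)\to H_v(\mathbb D).
\]

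So the proof is really a one-line deduction combining Lemma~\ref{lem-vid} with Corollary~\ref{cor-su-i}; no genuine obstacle remains, because all the analytic work (the equivalences of the growth conditions on log-convex weights, and the elementary sufficient condition for boundedness of $I$) has already been done in the preceding results. The only thing worth emphasising in the write-up is the choice of (vii) as the condition to invoke, since it is the one that matches verbatim the hypothesis of Corollary~\ref{cor-su-i} with $w(r)=v(r)/(1-r)$.
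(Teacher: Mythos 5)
Your proof is correct and is exactly the paper's intended argument: the paper states Corollary~\ref{cor-vid} without proof precisely because, as you observe, for a log-convex $v$ Lemma~\ref{lem-vid} reduces any of (i)--(vii) to (vii), i.e.\ to condition \eqref{eq-intv}, which is condition (ii) of Corollary~\ref{cor-su-i} with $w(r)=v(r)/(1-r)$ (a legitimate radial weight on $\mathbb D$), and the implication (ii)$\Longrightarrow$(iii) there yields the continuity of $I: H_w(\mathbb D)\to H_v(\mathbb D)$. Nothing is missing; the same one-line deduction already appears in the proof of Proposition~\ref{prop-sci}.
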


\begin{rem}
In contrast to the case of entire functions we could not prove that condition \eqref{eq-infv} is always necessary for the continuity of the integration operator $I: H_w(\mathbb D)\to H_v(\mathbb D)$, where $v$ is log-convex and $w(r)=v(r)/(1-r)$.
\end{rem}

\begin{cor}
Let $v$ be a log-convex radial weight on $\mathbb D$ and $w(r)=v(r)/(1-r)$. The operator $D: H_v(\mathbb{D}) \to H_w(\mathbb{D})$ is an epimorphism if and only if
\begin{equation}\label{eq-epi}
0< \displaystyle \liminf_{r \to 1^-} \dfrac{(1-r)v'(r)}{v(r)} \leq \limsup_{r \to 1^-} \dfrac{(1-r)v'(r)}{v(r)}< \infty.
\end{equation}
\end{cor}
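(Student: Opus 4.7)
The plan is to prove the two implications separately. Sufficiency is a direct combination of earlier results, while necessity relies on the open mapping theorem together with the $R_{\mathbb D}$ machinery developed in Section~3.

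For sufficiency, assume both limits lie in $(0,\infty)$. The $\limsup<\infty$ is condition~(i) of Lemma~\ref{lem-d}, so Theorem~\ref{thm-d-d} gives the continuity of $D:H_v(\mathbb D)\to H_w(\mathbb D)$. The $\liminf>0$ is condition~(i) of Lemma~\ref{lem-vid}, so Corollary~\ref{cor-vid} gives the continuity of $I:H_w(\mathbb D)\to H_v(\mathbb D)$. Since $D\circ I$ is the identity on $H_w(\mathbb D)$, $D$ is onto.

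For necessity, suppose $D$ is an epimorphism. The $\limsup<\infty$ follows immediately from Theorem~\ref{thm-d-d}, so the only nontrivial part is the $\liminf>0$. First, the open mapping theorem supplies $M>0$ such that every $g\in H_w(\mathbb D)$ admits $f\in H_v(\mathbb D)$ with $f'=g$ and $\|f\|_v\le M\|g\|_w$. Writing $f=Ig+c$ and evaluating at $z=0$ gives $|c|\le Mv(0)\|g\|_w$, hence $\|Ig\|_v\le 2M\|g\|_w$, so $I:H_w(\mathbb D)\to H_v(\mathbb D)$ is continuous. The next step is to convert this continuity into the $\liminf$ estimate. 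The idea is to invoke Proposition~\ref{prop-nci}, which demands $w\in R_{\mathbb D}$. By Lemma~\ref{lem-d} (the equivalence (i)$\Leftrightarrow$(iv) for log-convex weights), the $\limsup<\infty$ hypothesis forces $\log v$ to be of moderate growth in the sense of Horowitz, and the identity
\[
\frac{w(1-2^{-n-1})}{w(1-2^{-n})}=2\,\frac{v(1-2^{-n-1})}{v(1-2^{-n})}
\]
transfers this to $w$. Since $\log w(r)=\log v(r)+\log\bigl(1/(1-r)\bigr)$ is a sum of two convex functions of $\log r$, $w$ is itself log-convex. Thus $w$ is an H--weight, and Lemma~\ref{lem-Hor} places $w\in CR_{\mathbb D}\subset R_{\mathbb D}$. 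Proposition~\ref{prop-nci} then identifies the continuity of $I$ with condition~(ii) of Corollary~\ref{cor-su-i}, i.e., with condition~(vii) of Lemma~\ref{lem-vid}. The log-convexity of $v$ and the equivalences of Lemma~\ref{lem-vid} finally promote (vii) to (i), yielding $\liminf_{r\to1^-}(1-r)v'(r)/v(r)>0$.

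The main obstacle is precisely the step from continuity of $I$ to the $\liminf$ condition on $v$; as the preceding remark warns, this implication is not accessible for a general log-convex weight, but it becomes available here because the $\limsup<\infty$ hypothesis provides the moderate growth of $w$ that puts $w$ in $R_{\mathbb D}$ and makes Proposition~\ref{prop-nci} applicable. The rest of the argument is essentially bookkeeping across Lemmas~\ref{lem-d}, \ref{lem-vid}, \ref{lem-Hor} and Corollary~\ref{cor-vid}.
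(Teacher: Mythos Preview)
Your proof is correct and follows essentially the same route as the paper: sufficiency via Theorem~\ref{thm-d-d} and Corollary~\ref{cor-vid}, and necessity by showing that $w$ is a log-convex weight with $\log w$ of moderate growth (hence an H--weight), then invoking the criterion for continuity of $I$ to land in condition~(vii) of Lemma~\ref{lem-vid} and passing to~(i). The only differences are cosmetic: you spell out the open mapping argument for the continuity of $I$ and go through Lemma~\ref{lem-Hor} and Proposition~\ref{prop-nci} directly, whereas the paper simply cites Corollary~\ref{cor-cid}, which packages exactly those two ingredients.
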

\begin{proof}
The sufficiency follows immediately from Theorem~\ref{thm-d-d} and Corollary~\ref{cor-vid}.

Let now the operator $D: H_v(\mathbb{D}) \to H_w(\mathbb{D})$ is an epimorphism.
Then it is continuous and, by Theorem~\ref{thm-d-d}, the right hand side of \eqref{eq-epi} holds and $v$ is of moderate growth in the sense of Horowitz.
Moreover, it is easy to see that $w$ is also log-convex. Consequently, $w$ is also of moderate growth.
Since the differentiation operator is an epimorphism, the integration operator $I:H_w(\mathbb D)\to H_v(\mathbb D)$ is continuous.
By Corollary~\ref{cor-cid}, this and the above-mentioned properties of $w$ imply that condition \eqref{eq-intv} holds.
It remains to use Lemma~\ref{lem-vid} to obtain that \eqref{eq-infv} holds, which gives the left hand side of \eqref{eq-epi}.
\end{proof}

\textbf{Acknowledgements.}
The authors are greatly indebted to Professor W.~Lusky for his explanations concerning the general assumption (HL) used in \cite{HL-08}. The research of A.V. Abanin was supported by Russian Foundation for Basic Research under Project 15-01-01404 a. The research of Pham Trong Tien was supported by NAFOSTED under grant No. 101.02-2014.49. The final version of the article was written when the second author visited Vietnam Institute for Advanced Study in Mathematics (VIASM). He would like to thank the VIASM for hospitality and support.

\begin{bibdiv}
  \begin{biblist}
	    \bib{AD-15}{article}{
      author={Abakumov, E.},
      author={Doubtsov, E.},
      title={Moduli of holomorphic functions and logarithmically convex radial weights},
      journal={Bull. London Math. Soc.},
      volume={47},
      date={2015},
      pages={519 \ndash 532},
      }
      \bib{AT-12}{article}{
      author={Abanin, A. V.},
      author={Tien, Pham Trong},
      title={Painlev\`e null sets, dimension and compact embedding of weighted holomorphic spaces},
      journal={Studia Math.},
      volume={213},
      number={2},
      date={2012},
      pages={169\ndash 187},
      }
\bib{BBF-13}{article}{
      author={Beltr\'an, M. J.},
      author={Bonet, J.},
      author={Fern\'andez, C.},
      title={Classical operators on weighted Banach spaces of entire functions},
      journal={Proc. Amer. Math. Soc.},
      volume={141},
      date={2013},
      pages={4293 \ndash 4303},
      }
\bib{BBF-15}{article}{
      author={Beltr\'an, M. J.},
      author={Bonet, J.},
      author={Fern\'andez, C.},
      title={Classical operators on the H\"ormander algebras},
      journal={Discrete Contin. Dyn. Syst.},
      volume={35},
      date={2015},
      pages={637 \ndash 652},
      }
\bib{BBT-98}{article}{
      author={Bierstedt, K. D.},
      author={Bonet, J.},
      author={Taskinen, J.},
      title={Associated weights and spaces of holomorphic functions},
      journal={Studia Math.},
      volume={127},
      date={1998},
      pages={137 \ndash 168},
      }
\bib{BS-93}{article}{
      author={Bierstedt, K. D.},
      author={Summers, W. H.},
      title={Biduals of weighted Banach spaces of analytic functions},
      journal={J. Austral. Math. Soc. (Series A)},
      volume={54},
      date={1993},
      pages={70 \ndash 79},
      }
\bib{B-03}{article}{
      author={Bonet, J.},
      title={Weighted spaces of holomorphic functions and operators between them},
      journal={Seminar of Mathematical Analysis (Malaga/Seville, 2002/2003), Collecc. Abierta 64, Univ. Sevilla Secr. Publ., Seville},
      date={2003},
      pages={117 \ndash 138},
      }
\bib{B-09}{article}{
      author={Bonet, J.},
      title={Dynamics of the differentiation operator on weighted spaces of entire functions},
      journal={Math. Z.},
      volume={261},
      date={2009},
      pages={649 \ndash 677},
      }
\bib{BB-13}{article}{
      author={Bonet, J.},
      author={Bonilla, A.},
      title={Chaos of the differentiation operator on weighted Banach spaces of entire functions},
      journal={Complex Anal. Oper. Theory},
      volume={7},
      date={2013},
      pages={33 \ndash 42},
      }
\bib{BDL-99}{article}{
      author={Bonet, J.},
      author={Domanski, P.},
      author={Lindstr\"om, M.},
      title={Essential norm and weak compactness of composition operators on weighted Banach spaces of analytic functions},
      journal={Canad. Math. Bull.},
      volume={42},
      date={1999},
      pages={139 \ndash 148},
      }
\bib{BDLT-98}{article}{
      author={Bonet, J.},
      author={Domanski, P.},
      author={Lindstr\"om, M.},
      author={Taskinen, J.},
      title={Composition operators between weighted Banach spaces of analytic functions},
      journal={J. Austral. Math. Soc. (Series A)},
      volume={64},
      date={1998},
      pages={101 \ndash 118},
      }
\bib{BW-03}{article}{
      author={Bonet, J.},
      author={Wolf, E.},
      title={A note on weighted Banach spaces of holomorphic functions},
      journal={Arch. Math.},
      volume={81},
      date={2003},
      pages={650 \ndash 654},
      }
\bib{ClKo-68}{article}{
      author={Clunie, J.},
      author={K\"ov\'ari, T.},
      title={On integral functions having prescribed asymptotic growth II},
      journal={Canad. J. Math.},
      volume={20},
      date={1968},
      pages={7 \ndash 20},
      }
\bib{DoLi-02}{article}{
      author={Domanski, P.},
      author={Lindstr\"om, M.},
      title={Sets of interpolation and sampling for weighted Banach spaces of holomorphic functions},
      journal={Ann. Polon. Math.},
      volume={79},
      date={2002},
      pages={233 \ndash 264},
      }
\bib{HL-08}{article}{
      author={Harutyunyan, A.},
      author={Lusky, W.},
      title={On the boundedness of the differentiation operator between weighted spaces of holomorphic functions},
      journal={Studia Math.},
      volume={184},
      number={3},
      date={2008},
      pages={233 \ndash 247},
      }
\bib{Hor-95}{article}{
      author={Horowitz, C.},
      title={Zero sets and radial zero sets in function spaces},
      journal={J. Anal. Math.},
      volume={65},
      date={1995},
      pages={145 \ndash 159},
      }
\bib{L-92}{article}{
      author={Lusky, W.},
      title={On the structure of $H_{v0}(D)$ and $h_{v0}(D)$},
      journal={Math. Nachr.},
      volume={159},
      date={1992},
      pages={279 \ndash 289},
      }
\bib{L-95}{article}{
      author={Lusky, W.},
      title={On weighted spaces of harmonic and holomorphic functions},
      journal={J. London Math. Soc.},
      volume={51},
      date={1995},
      pages={309 \ndash 320},
      }
\bib{L-96}{article}{
      author={Lusky, W.},
      title={Growth conditions for harmonic and holomorphic functions},
      journal={in: Functional Analysis (Trier, 1994), S. Dierolf et al. (eds.), de Gruyter},
      date={1996},
      pages={281 \ndash 291},
      }
\bib{L-06}{article}{
      author={Lusky, W.},
      title={On the isomorphism classes of weighted spaces of harmonic and holomorphic functions},
      journal={Studia Math.},
      volume={75},
      date={2006},
      pages={19 \ndash 45},
      }
  \end{biblist}
  \end{bibdiv}

\end{document}